\newtheorem{thm}{Theorem}[section]     
\newtheorem{lem}[thm]{Lemma}
\newtheorem{prop}[thm]{Proposition}
\theoremstyle{definition}
\theoremstyle{remark}
\newcommand{\tim}{\times}   
\newcommand{\R}{\mathbb R}
\newcommand{\pl}{\partial}
\newcommand{\AC}{{\rm AC}}
\newcommand{\cA}{\mathcal{A}}
\newcommand{\N}{\mathbb N}
\renewcommand{\d}{\,\mathrm{d}}
\newcommand{\SP}{\,\mathrm{SP}}
\newcommand{\disp}{\displaystyle}
\newcommand{\gth}{\theta}
\newcommand{\gep}{\varepsilon}
\newcommand{\gd}{\delta}
\newcommand{\gs}{\sigma}
\newcommand{\gb}{\beta}
\newcommand{\fr}{\frac}
\newcommand{\gO}{\Omega}
\newcommand{\hb}{\mbox}
\newcommand{\bye}{\end{document}}
\newcommand{\by}{\end{proof}\end{document}}
\def\ol#1{\,\overline{\!#1\!}\,}
\def\iol#1{\,\overline{\!#1}}
\def\go{\omega}
\def\gz{\zeta}
\def\Lip{\!\,{\rm Lip}\,}
\def\gg{\gamma}
\def\DT#1{\mathbf{\dot{#1}}}
\def\mid{\,:\,}
\def\aln{&\,}
\numberwithin{equation}{section}
\begin{document}

\title[Asymptotic Solutions]{Long-time asymptotic solutions  
of convex Hamilton-Jacobi equations with 
Neumann type boundary conditions}

\author{Hitoshi Ishii}
\address{Department of Mathematics, Waseda University, Nishi-Waseda, Shinjuku, 
Tokyo, 169-8050 Japan}
\email{hitoshi.ishii@waseda.jp}
\thanks{  
The author was supported in part by KAKENHI \#20340026, 
\#20340019 and \#21224001, JSPS}



\subjclass[2010]{Primary 35B40; Secondary 35F31, 35D40, 37J50, 49L25}   



\keywords{Asymptotic solutions, Hamilton-Jacobi equations, Neumann condition, 
Aubry sets, weak KAM theory}

\begin{abstract}
We study the long-time asymptotic behavior of solutions $u$ of the Hamilton-Jacobi 
equation $u_t(x,t)+H(x,\, Du(x,t))=0$ in $\gO\tim(0,\,\infty)$, where 
$\gO$ is a bounded open subset of $\R^n$,   
with Hamiltonian $H=H(x,p)$ being convex and coercive in $p$, and establish the uniform 
convergence of $u$ to an asymptotic solution as $t\to\infty$.      
\end{abstract}

\maketitle


\section{Introduction}
We study the long-time behavior of solutions of 
the initial-boundary (value) problem for 
the Hamilton-Jacobi equation
\begin{equation}\label{eq:1.1}
u_t(x,t)+H(x,Du(x,t))=0 \ \ \hb{ in }\gO\tim(0,\,\infty),
\end{equation} 
where $u=u(x,t)$ represents the unknown function on $\ol\gO\tim(0,\,\infty)$, 
$\gO$ is a bounded domain (i.e., open connected subset) of $\R^n$, $u_t:=\pl u/\pl t$, 
$Du:=(\pl u/\pl x_1,...,\pl u/\pl x_n)$ and $H=H(x,p)$ is the so-called Hamiltonian, 
which is a given continuous function on $\ol\gO\tim\R^n$ assumed to be 
convex in $p$.  
We are concerned with the Neumann type boundary condition 
\begin{equation}\label{eq:1.2}
D_\gg u(x,t)=g(x) \ \ \hb{ on }\pl\gO\tim(0,\,\infty), 
\end{equation}
where $g$ is a given continuous function on $\pl\gO$, 
$D_\gg u(x,t)$ denotes the 
derivative of $u$ in the direction of the 
vector $\gg(x)$, i.e., $D_\gg u(x,t):=\gg(x)\cdot Du(x,t)$, and 
$\gg$ is a given continuous vector field on $\pl\gO$ 
oblique to $\pl\gO$.   
The initial condition is given by a continuous function $u_0$ 
on $\ol\gO$. That is, 
\begin{equation}\label{eq:1.3}
u(x,0)=u_0(x) \ \ \hb{ for }x\in\ol\gO. 
\end{equation} 

In addition to the continuity 
of $H$, $g$, $\gg$ and $u_0$ and the boundedness of $\gO$,  
we make the following assumptions. 

\begin{description}
\setlength{\itemsep}{5pt}
\addtolength{\labelwidth}{20pt}
\item[(A1)] $H$ is a convex Hamiltonian, i.e., for each $x\in\ol\gO$ the function 
$H(x,\cdot)$ is convex on $\R^n$.
\item[(A2)] $H$ is coercive. That is, 
$\disp
\lim_{|p|\to\infty}H(x,p)=\infty
$ for all $x\in\ol\gO$. 
\item[(A3)] $\gO$ is a $C^1$ domain.  
\item[(A4)] $\gg$ is oblique to $\pl\gO$. That is, for any $x\in\pl\gO$, 
if $\nu(x)$ denotes the outer unit normal vector at $x$, then $\nu(x)\cdot \gg(x)>0$.  
\end{description}

According to \cite[Theorem 5.1]{Ishii_weakKAM_10} (see also \cite[Theorem 12]{Lions_Neumann_85}), 
we have the following existence and uniqueness  
theorem.   

\begin{thm}\label{thm:1.1} Under the above assumptions, there exists a unique solution   
$u\in C(\ol\gO\tim[0,\,\infty))$ of (\ref{eq:1.1})--(\ref{eq:1.3}). 
\end{thm}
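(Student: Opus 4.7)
The plan is to set this up within the standard viscosity solution framework for Hamilton--Jacobi equations with oblique boundary conditions and then invoke the two-pronged strategy of \emph{comparison principle} for uniqueness and \emph{Perron's method} for existence. First I would fix the precise notion of solution: $u$ is a viscosity subsolution if, at any $(x_0,t_0)\in\ol\gO\tim(0,\infty)$ and any smooth test function $\gf$ with $u-\gf$ attaining a local max at $(x_0,t_0)$, one has $\gf_t+H(x_0,D\gf)\le 0$ when $x_0\in\gO$ and $\min\{\gf_t+H(x_0,D\gf),\,D_\gg\gf-g(x_0)\}\le 0$ when $x_0\in\pl\gO$, with the symmetric condition for supersolutions. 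The initial condition is imposed pointwise thanks to the continuity of $u_0$.

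Next I would prove a comparison principle: if $u$ is an upper semicontinuous subsolution and $v$ a lower semicontinuous supersolution, both bounded on $\ol\gO\tim[0,T]$ with $u(\cdot,0)\le v(\cdot,0)$, then $u\le v$. The proof runs by contradiction through a doubling-of-variables penalization, $\gf(x,y,t,s)=u(x,t)-v(y,s)-\gep^{-1}|x-y|^2-\gep^{-1}(t-s)^2 - \gd/t$, at whose maximum $(\hat x,\hat y,\hat t,\hat s)$ one argues as usual. The only delicate point is when $\hat x$ or $\hat y$ lies on $\pl\gO$: here obliqueness (A4) together with the $C^1$ regularity (A3) is used to perturb the penalization by a term involving a smooth extension of $\gg$ (or of $\nu$) so that at boundary maxima the Neumann inequality cannot be activated, forcing the Hamiltonian inequalities to hold and yielding the standard contradiction via convexity/coercivity of $H$ in $p$ (needed only through continuity of $H$ on $\ol\gO\tim\R^n$ and boundedness of the relevant gradients).

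For existence I would apply Perron's method. One first constructs global sub- and supersolutions $u^\pm\in C(\ol\gO\tim[0,\infty))$ with $u^\pm(\cdot,0)=u_0$; concretely, taking $u^\pm(x,t)=u_0(x)\pm Ct$ with $C$ chosen larger than $\sup_{\ol\gO}|H(x,Du_0)|$ works if $u_0\in C^1$ with $D_\gg u_0=g$, and otherwise one mollifies $u_0$ and uses the coercivity of $H$ and obliqueness of $\gg$ to absorb the mismatch in the boundary condition, as in \cite{Ishii_weakKAM_10}. Perron's construction then produces a (possibly discontinuous) solution $u$ whose envelopes $u^*$ and $u_*$ are sub- and supersolution respectively; the comparison principle applied to $u^*$ and $u_*$ yields $u^*\le u_*$, hence $u=u^*=u_*$ is continuous and solves the problem.

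The main obstacle is unquestionably the comparison step at boundary points, because the Neumann condition in the viscosity sense creates an alternative that can fail to rule out the bad case unless the obliqueness (A4) is exploited through a carefully chosen perturbation of the penalty function. This is precisely the step treated in \cite[Theorem 5.1]{Ishii_weakKAM_10} and \cite[Theorem 12]{Lions_Neumann_85}, so in the present paper it is natural simply to cite those results rather than reproduce the argument.
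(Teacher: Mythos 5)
Your outline is sound, but note that the paper itself offers no proof of Theorem \ref{thm:1.1}: it is taken verbatim from \cite[Theorem 5.1]{Ishii_weakKAM_10} and \cite[Theorem 12]{Lions_Neumann_85}, so your concluding decision to defer the hard boundary-comparison step to those references is exactly what the author does. Where your route genuinely differs from the cited source is the existence half: in \cite{Ishii_weakKAM_10} the solution is not produced by Perron's method but is constructed as the value function of the reflection (Skorokhod) control problem, i.e.\ precisely the variational formula with triples $(\eta,v,l)\in\SP(x)$ that the present paper recalls and relies on in Section 3; Perron's method would give existence too, but it would not hand you that representation, which is the actual workhorse of the asymptotic analysis. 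Two further cautions on your comparison step. First, under (A3)--(A4) the vector field $\gg$ is only continuous and $\pl\gO$ only $C^1$, so the ``standard'' perturbation of the penalty by a smooth extension of $\gg$ (which classically requires Lipschitz $\gg$ or more regular domains) does not apply directly; the cited proofs go through an approximation of $\gg$ and test functions built from a homogeneous function $\gz$ with $\xi\cdot D_z\gz(\xi,z)=(e_n\cdot\xi)(e_n\cdot z)$ (the analogue of Lemma \ref{lem:2.6} here), exploiting the Lipschitz bound on subsolutions coming from coercivity (A2). Second, your barrier construction $u_0\pm Ct$ needs care at $\pl\gO$: for general continuous $u_0$ one must mollify and correct the boundary mismatch (e.g.\ adding a function with large $D_\gg$ component, again using (A2) and (A4)) before the comparison principle pins down $u^*(\cdot,0)=u_*(\cdot,0)=u_0$. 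With those caveats your argument is correct in outline, and consistent with the paper's remark that convexity (A1) is not needed for this theorem.
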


The stationary problem associated with (\ref{eq:1.1})--(\ref{eq:1.3}) is the so-called 
ergodic problem or additive eigenvalue problem, that is, the problem of finding 
a pair of a constant $c\in \R$ and a function $w\in C(\ol\gO)$ such that 
$w$ is a solution of 
\begin{equation}\label{eq:1.4}
\left\{
\begin{aligned}  
&H(x,\,Dw(x))=c \ \ \hb{ in }\gO,\\
&D_\gg w(x)=g(x) \ \ \hb{ on }\pl\gO. 
\end{aligned}
\right.
\end{equation}

Existence results for the ergodic problem (\ref{eq:1.4}) go back to  
\cite[Section VII]{Lions_Neumann_85}. 
According to \cite[Theorem 6.1]{Ishii_weakKAM_10} or \cite[Theorem 14]{Lions_Neumann_85}, we have

\begin{prop}\label{prop:1.2}
There exists a unique constant $c\in\R$ such that (\ref{eq:1.4}) 
has a solution $w\in\Lip(\ol\gO)$. 
\end{prop}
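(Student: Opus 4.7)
The plan is to use the vanishing discount method, a standard technique for such ergodic problems. For each $\gl>0$, I would first consider the discounted stationary problem
\begin{equation*}
\gl v_\gl(x)+H(x,Dv_\gl(x))=0 \ \hb{ in }\gO,\qquad D_\gg v_\gl(x)=g(x) \ \hb{ on }\pl\gO,
\end{equation*}
which admits a unique continuous viscosity solution by an argument parallel to the one behind Theorem \ref{thm:1.1} (Perron's method combined with comparison for discounted equations under Neumann-type boundary data).

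The first and central step is to derive bounds on $v_\gl$ that are uniform in $\gl$. Using the obliqueness assumption (A4), I can construct Lipschitz functions $\gf^\pm\in\Lip(\ol\gO)$ satisfying $D_\gg\gf^-\le g\le D_\gg\gf^+$ on $\pl\gO$; such functions exist because one can extend any continuous boundary datum along the transversal direction $\gg$ in a tubular neighborhood of $\pl\gO$ and then cut off. Adding suitable constants, $\gf^{\pm}\pm M/\gl$ become sub- and supersolutions of the discounted problem for $M$ large enough, whence $\|\gl v_\gl\|_\infty\le C$ with $C$ independent of $\gl$. Coercivity (A2) then forces $|Dv_\gl|$ to be uniformly bounded almost everywhere, so the family $\{v_\gl\}_{\gl>0}$ is equi-Lipschitz on $\ol\gO$.

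Next, fixing $x_0\in\ol\gO$, I set $w_\gl(x):=v_\gl(x)-v_\gl(x_0)$ and $c_\gl:=-\gl v_\gl(x_0)$, so that $w_\gl$ satisfies $\gl w_\gl+H(x,Dw_\gl)=c_\gl$ in $\gO$ together with the same Neumann condition on $\pl\gO$. The sequences $\{w_\gl\}$ and $\{c_\gl\}$ are uniformly bounded and equi-Lipschitz, so by Arzel\`a--Ascoli there is a subsequence $\gl_k\to 0^+$ along which $w_{\gl_k}\to w$ uniformly on $\ol\gO$ and $c_{\gl_k}\to c$ for some $w\in\Lip(\ol\gO)$ and $c\in\R$. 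Passing to the limit via the standard stability of viscosity solutions (including for the boundary condition interpreted in the viscosity sense) yields that $(c,w)$ is a solution of (\ref{eq:1.4}).

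Finally, for uniqueness of the constant $c$, suppose $(c_1,w_1)$ and $(c_2,w_2)$ both solve (\ref{eq:1.4}) with $w_i\in\Lip(\ol\gO)$. Then $u_i(x,t):=w_i(x)-c_i t$ is a viscosity solution of (\ref{eq:1.1})--(\ref{eq:1.2}) with initial data $w_i$, so by the comparison/uniqueness behind Theorem \ref{thm:1.1} one has $\|u_1(\cdot,t)-u_2(\cdot,t)\|_\infty\le\|w_1-w_2\|_\infty$ for all $t\ge 0$. Rearranging gives $|c_1-c_2|\,t\le 2\|w_1-w_2\|_\infty$ for every $t\ge 0$, which forces $c_1=c_2$. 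The principal obstacle in the whole argument is the uniform $L^\infty$ bound on $\gl v_\gl$ and the ensuing equi-Lipschitz estimate, which require genuinely combining the obliqueness of $\gg$ with the coercivity of $H$; once these are in hand, the remainder follows standard viscosity-solution patterns.
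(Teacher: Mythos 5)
Your argument is correct in outline and is essentially the paper's own route: the paper does not prove Proposition \ref{prop:1.2} directly but cites \cite[Theorem 6.1]{Ishii_weakKAM_10} and \cite[Theorem 14]{Lions_Neumann_85}, and the classical proof behind those citations is exactly your vanishing-discount scheme (uniform bound on $\lambda v_\lambda$ via oblique barriers, equi-Lipschitz bound from coercivity, Arzel\`a--Ascoli plus stability, and uniqueness of $c$ from comparison for the evolution problem as in Theorem \ref{thm:1.1}). The only ingredient you take for granted, and which is the real technical content supplied by the cited works under (A2)--(A4) with merely continuous oblique $\gg$ and a $C^1$ domain, is the comparison/existence theory for the discounted Neumann problem; granting that, your steps (including the barrier construction, which is simplest via a $C^1$ defining function $\rho$ with $D\rho=\nu$ on $\pl\gO$, and the estimate $|c_1-c_2|\,t\le 2\|w_1-w_2\|_\infty$) are sound.
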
  

The unique constant $c$ given by the above proposition 
is called the \emph{critical value} and is given as 
the minimum value of $a\in\R$ for which 
problem (\ref{eq:1.4}), with $c$ replaced by $a$, has a subsolution.  
A general useful remark here is this: under condition (A2), 
every subsolution $w$ of (\ref{eq:1.4}), with any given constant $c$, is 
Lipschitz continuous on $\ol\gO$.

In the above two propositions, assumption (A1) is superfluous to obtain the 
stated conclusions, and we do not seek for optimal hypotheses for such conclusions.   
In fact, this work is a continuation of \cite{Ishii_weakKAM_10}, where 
the author \cite{Ishii_weakKAM_10} has studied (\ref{eq:1.4}) as well as (\ref{eq:1.1})--(\ref{eq:1.3}) 
in the view point of weak KAM theory under assumptions (A1)--(A4). 
  
The notion of solution of (\ref{eq:1.1}) and (\ref{eq:1.2}) or (\ref{eq:1.4}) 
adopted here is that of viscosity solution and we refer the reader to \cite{Barles-book_94, 
BardiCapuzzo_97, UG} for a general account 
of viscosity solutions theory. 

We set 
\begin{align*}
&Q=\{(x,p)\in\ol\gO\tim\R^n\mid H(x,p)=c\},\\
&S=\{(x,\xi)\in\ol\gO\tim\R^n\mid \hb{there exists }p\in\R^n\hb{ such that }(x,p)\in Q \hb{ and } 
\xi\in D_2^+H(x,p) \},
\end{align*}
where $c$ is the critical value.
  
We now introduce another assumption on $H$ and we assume in our main theorem 
that either of the following 
(A5)$_+$ or (A5)$_-$ holds:  

\begin{description}
\setlength{\itemsep}{5pt}
\addtolength{\labelwidth}{20pt}
\item[(A5)$_\pm$]There exists a modulus $\go$ satisfying $\go(r)>0$ for 
$r>0$ such that if $(x,\,p)\in Q$, $\xi\in D_2^+H(x,\,p)$ and $p'\in\R^n$, then
\[
H(x,\,p+p')\geq \xi\cdot(p+p')+\go((\xi\cdot p')_\pm).
\]
\end{description} 

In the above and in what follows, the term ``modulus'' is used to indicate a continuous, nondecreasing 
function $\go$ on $[0,\,\infty)$ such that $\go(0)=0$, and we use the notation: $r_+=\max\{r,,0\}$ 
and $r_-=\min\{r,\,0\}$ for $r\in\R$.     
 
We are now in position to state our main result. 

\begin{thm}\label{thm:1.3}  
Under the above hypotheses, there exists 
a solution $w\in\Lip(\ol\gO)$
of (\ref{eq:1.4}) such that   
$u(x,t)+ct$ converges to $w(x)$ uniformly for $x\in\ol\gO$ as $t\to \infty$.   
\end{thm}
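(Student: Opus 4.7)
I would reduce to the case of vanishing critical value by setting $v(x,t):=u(x,t)+ct$, so that $v$ is a viscosity solution of $v_t+H(x,Dv)-c=0$ in $\gO\tim(0,\infty)$, $D_\gg v=g$ on $\pl\gO\tim(0,\infty)$, $v(\cdot,0)=u_0$. Fix a Lipschitz critical solution $w_0$ from Proposition \ref{prop:1.2} and set $M:=\|u_0-w_0\|_{\infty,\ol\gO}$. By the comparison result of \cite{Ishii_weakKAM_10}, $w_0-M\leq v\leq w_0+M$ on $\ol\gO\tim[0,\infty)$. Coercivity (A2) then yields a uniform $L^\infty$-bound on $Dv(\cdot,t)$, and, via the equation, on $v_t$; hence $\{v(\cdot,t)\}_{t\geq 1}$ is equi-Lipschitz and, in particular, precompact in $C(\ol\gO)$.

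Next, form the half-relaxed limits
\begin{equation*}
\ol v(x):=\limsup_{(y,t)\to(x,\infty)} v(y,t),\qquad \underline v(x):=\liminf_{(y,t)\to(x,\infty)} v(y,t);
\end{equation*}
both are Lipschitz on $\ol\gO$ and, by the usual stability of viscosity sub- and supersolutions, $\ol v$ is a subsolution and $\underline v$ is a supersolution of (\ref{eq:1.4}). The theorem is equivalent to $\ol v=\underline v=:w$, since then the equi-Lipschitz bound upgrades pointwise to uniform convergence on $\ol\gO$.

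The heart of the argument is to prove this equality using assumption (A5)$_\pm$ together with the weak KAM machinery of \cite{Ishii_weakKAM_10}. I would invoke the variational representation of $v$ in terms of a Lagrangian $L$ dual to $H-c$ and admissible curves in $\ol\gO$ that reflect along $\gg$ on $\pl\gO$, and the associated projected Aubry set $\cA\subset\ol\gO$. Applying comparison to $v$ and $w\pm\gep$ for an arbitrary critical solution $w$, and evaluating the inequality in (A5)$_\pm$ along near-optimal curves, one shows that the excess of the minimizer's action over the critical-flow action is bounded below by an integral of $\go((\xi\cdot p')_\pm)$; as $t\to\infty$ this forces near-optimal trajectories to spend almost all of their time in an arbitrarily small neighborhood of $\cA$. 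Because a critical solution is uniquely determined by its trace on $\cA$ (a \emph{Dirichlet principle from the Aubry set} for the Neumann problem, proved in \cite{Ishii_weakKAM_10}), one concludes $\ol v|_{\cA}=\underline v|_{\cA}$, and the representation formula then propagates the equality from $\cA$ to all of $\ol\gO$.

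The main obstacle is precisely the rigidity step in the preceding paragraph: converting the one-sided convexity statement (A5)$_\pm$ into effective concentration of near-optimal curves on the Aubry set when these curves may reflect repeatedly off $\pl\gO$ and the action carries a boundary cost coming from $g$ and $\gg$. In the purely interior setting (torus or Dirichlet) this passage is by now classical, going back to the PDE arguments of Barles--Souganidis and Ichihara--Ishii and the weak KAM viewpoint of Fathi and Davini--Siconolfi; the novelty here is its implementation under the oblique Neumann condition, which is why the extended Aubry-set framework of \cite{Ishii_weakKAM_10} is indispensable. The sign alternative in (A5)$_\pm$ reflects the two possible time orientations and determines whether the rigidity argument is run for $\ol v$ or for $\underline v$.
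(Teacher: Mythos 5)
Your reduction to $c=0$, the equi-Lipschitz bounds and the introduction of the half-relaxed limits are unobjectionable, but the core of the argument --- the actual use of (A5)$_\pm$ --- is only announced, not carried out, and the mechanism you propose is not one that is known to work as stated. You assert that (A5)$_\pm$ bounds the action excess of near-optimal curves from below by $\int\go((\xi\cdot p')_\pm)$, that this forces concentration of such curves near the Aubry set $\cA$, that this yields $\ol v=\underline v$ on $\cA$, and that a ``trace on $\cA$'' uniqueness principle then propagates the equality. Each of these implications is exactly where the difficulty lies (for the limsup, equality on $\cA$ is the hard inequality, and your concentration claim does not obviously produce it), and none is proved; nor do you explain how the boundary cost $\int l(s)g(\eta(s))\d s$ carried by reflected Skorokhod trajectories is handled in the excess estimate. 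The paper's proof is different and does not involve the Aubry set at all: it takes $u_\infty=\liminf_{t\to\infty}u(\cdot,t)$, which by the cited results is already a Lipschitz solution of (\ref{eq:1.5}) with the uniform convergence (\ref{eq:3.2}); for each $z$ it takes a curve $(\eta,v,l)\in\SP(z)$ calibrated for $u_\infty$; it shows that $(\eta(s),-v(s))\in S$ a.e., so that (A5)$_\pm$ enters only through the Lagrangian inequality (\ref{eq:3.3}), $L(x,(1\pm\gd)\xi)\le(1\pm\gd)L(x,\xi)+\gd\go_1(\gd)$ on $S$; and it concludes by a time dilation of the calibrated curve by the factor $1\pm\gd$, the dynamic programming principle, and the uniform almost-attainment $u(x,\gs(x))<u_\infty(x)+\gep$ with $\gs(x)\in[0,\tau]$, giving $u(z,t)\le u_\infty(z)+\tau\go_1(\tau/(t-\tau))+\gep$ uniformly in $z$. (Incidentally, the $\pm$ alternative corresponds to stretching versus compressing time in this dilation, not to running a rigidity argument for $\ol v$ versus $\underline v$.)

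You have also omitted the genuinely new technical ingredient that the Neumann condition requires and that your plan would need in any case: to verify that the calibrated curve has velocities in $S$ (equivalently, to ``evaluate (A5)$_\pm$ along curves''), one must differentiate the subsolution $u_\infty$ along a curve that may lie on $\pl\gO$ on a set of positive measure and produce a measurable momentum $q\in L^\infty$ with $\fr{\d}{\d s}u_\infty(\eta(s))=q(s)\cdot\DT\eta(s)$, $H(\eta(s),q(s))\le 0$ a.e., and $\gg(\eta(s))\cdot q(s)\le g(\eta(s))$ at boundary times. This is Theorem \ref{thm:2.1}, whose proof (smoothing of $\gg$ and $g$, the auxiliary function $\psi_\gep$ solving $\gg_\gep\cdot D\psi_\gep=g_\gep+3\gep$, and a boundary-adapted sup-convolution) occupies Section 2; without such a statement your central step cannot even be formulated under the oblique boundary condition, so as written the proposal has a genuine gap rather than an alternative proof.
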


The above theorem states that the solution $u(x,t)$ of (\ref{eq:1.1})--(\ref{eq:1.3}) 
``converges'' to 
an ``asymptotic solution'' $w(x)-ct$ of (\ref{eq:1.1}), (\ref{eq:1.2})    
uniformly on $\ol\gO$ as $t\to \infty$.  

The study of the long-time asymptotic behavior of solutions of Hamilton-Jacobi 
equations (\ref{eq:1.1}) has a long history, which goes back to \cite{Kruzkov_67, 
Lions-redbook_82, Barles_85}, 
and it has received an intense interest in recent years. For some recent developments, 
we refer to \cite{fathi_98, NamahRoquejoffre_99, BarlesSouganidis-largetime_00, 
Roquejoffre_01, DaviniSiconolfi_06, FujitaIshiiLoreti_06, Ishii_08,  
IchiharaIshii_semiperiodic_08, IchiharaIshii_1D_08, IchiharaIshii_09, 
Mitake-asymptotic_08, 
Mitake-large-time_08}. These literatures have established results similar to the above theorem 
in the case when $\gO$ is a compact manifold without boundary, typically 
an $n$-dimensional flat torus (\cite{fathi_98, NamahRoquejoffre_99, BarlesSouganidis-largetime_00, 
Roquejoffre_01, DaviniSiconolfi_06}), or in the case when $\gO$ is the whole 
$n$-dimensional Euclidean space under an appropriate behavior of solutions at infinity 
(\cite{BarlesRoquejoffre_ergodictype_06, FujitaIshiiLoreti_06, Ishii_08, 
IchiharaIshii_semiperiodic_08, IchiharaIshii_1D_08, IchiharaIshii_09}), 
or in the case of the state-constraints or the Dirichlet boundary conditions 
(\cite{Roquejoffre_01, Mitake-asymptotic_08, Mitake-large-time_08}).    
Concerning the Neumann boundary conditions, Theorem \ref{thm:1.3} 
is one of first, general results on the convergence of solutions of (\ref{eq:1.1}), 
(\ref{eq:1.2}) to asymptotic solutions. In this regard, the author recently 
learned that G. Barles and H. Mitake (\cite{BarlesMitake_10}) had obtained convergence results 
similar to the above theorem. 
They took a PDE approach similar to the one in \cite{BarlesSouganidis-largetime_00}, which is fairly  
different from ours. They do not assume the convexity of $H$ although our convergence result 
under (A5)$_-$ seems to be out of their scope.

Henceforth, by replacing $H$ by $H-c$ if necessary, we normalize that 
$c=0$. Thus, the problem 
\begin{equation}\label{eq:1.5}
\left\{
\begin{aligned}
&H(x,\,Dw(x))=0 \ \ \hb{ in }\gO,\\
&D_\gg w(x)=g(x) \ \ \hb{ on }\pl\gO. 
\end{aligned}
\right.
\end{equation}
has a solution. The conclusion of Theorem \ref{thm:1.3} is now stated as
the uniform convergence on $\ol\gO$ 
of the solution $u(x,t)$ of (\ref{eq:1.1})--(\ref{eq:1.3}) 
to a solution $w(x)$ of (\ref{eq:1.5}) as $t\to\infty$.     

In the next section we establishes a theorem which adapts \cite[Proposition 2.4]{Ishii_08} 
to accommodate the Neumann type boundary condition. 
In Section 3, we prove our main result, Theorem \ref{thm:1.3}. 
In Section 4, we give formulas for asymptotic solutions 
similar to \cite{DaviniSiconolfi_06, FujitaIshiiLoreti_06, Ishii_08, IchiharaIshii_semiperiodic_08, IchiharaIshii_09, Mitake-asymptotic_08, Mitake-large-time_08}, which are now standard observations.

\section{An existence result} 

We write $B_r$ for the open ball $\{x\in\R^n\mid |x|<r\}$, with $r>0$, 
and $e_n$ for the unit vector $(0,...,0,1)\in\R^n$. 
Let $I=[0,\,T]$, with $0<T<\infty$. 

In this section we will be devoted to proving the following theorem.

\begin{thm}\label{thm:2.1}Let $u\in C(\ol\gO)$ be a subsolution of (\ref{eq:1.5}). 
Let $\eta\in\AC(I,\,\R^n)$ be such that 
$\eta(t)\in\ol\gO$ for $t\in I$. Set $I_{\pl}=\{t\in I\mid \eta(t)\in\pl\gO\}$. 
Then there exists a function 
$p\in L^\infty(I,\,\R^n)$ such that 
\begin{align}
&\fr{\d }{\d t}u\circ\eta(t)=p(t)\cdot\DT{\eta}(t) \ \ \hb{ a.e. }t\in I, 
\label{eq:2.1}\\
&H(\eta(t),\,p(t))\leq 0 \ \ \hb{ a.e. }t\in I,
\label{eq:2.2}\\
&\gg(\eta(t))\cdot p(t)\leq g(\eta(t)) \ \ \hb{ a.e. }t\in I_\pl.
\label{eq:2.3} 
\end{align}
\end{thm}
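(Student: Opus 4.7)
The plan is to parallel the boundary-free argument of \cite[Proposition 2.4]{Ishii_08}, replacing its standard interior mollification by a boundary-aware regularization inspired by the Neumann weak KAM framework of \cite{Ishii_weakKAM_10}. The main idea is to construct a sequence of $C^1$ approximate subsolutions $u^\gep\in C^1(\ol\gO)$ such that $u^\gep\to u$ uniformly, $\sup_\gep\|Du^\gep\|_{L^\infty(\ol\gO)}<\infty$, and
\[
H(x,Du^\gep(x))\leq \alpha_\gep\ \hb{on }\ol\gO,\qquad \gg(x)\cdot Du^\gep(x)\leq g(x)+\beta_\gep\ \hb{on }\pl\gO,
\]
for some $\alpha_\gep,\beta_\gep\to 0$. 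Setting $p_\gep(t):=Du^\gep(\eta(t))$ produces a uniformly bounded family in $L^\infty(I,\R^n)$, and any weak-$*$ accumulation point $p$ of $p_\gep$ along a subsequence will be the sought selection. Note that $u$ is Lipschitz on $\ol\gO$ by coercivity, as remarked after Proposition \ref{prop:1.2}, so the uniform gradient bound on $u^\gep$ will follow once the construction is carried out with controlled seminorms.

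For the construction I would cover $\ol\gO$ by one patch compactly contained in $\gO$, on which a straight mollification $u*\rho_\gep$ works (convexity of $H$ and Jensen's inequality give the interior bound), together with finitely many boundary patches. On each boundary patch I locally flatten $\pl\gO$ by a $C^1$ diffeomorphism, turning $\gO$ into $\{y_n>0\}$ and $\gg$ into a continuous field $\tilde{\gg}$ whose $n$-component is strictly negative by (A4). The recipe is then $\tilde{u}^{\gd,\gep}(y):=(\tilde{u}(\cdot+\gd e_n))*\rho_\gep(y)$ with $\gep\ll\gd$: this is smooth up to $\{y_n=0\}$, inherits the interior Hamilton--Jacobi bound up to $o(1)$ by convexity plus Jensen, and has its gradient at $\{y_n=0\}$ given by a $\gep$-average of gradients of $\tilde{u}$ at points simultaneously strictly interior and inward-aligned against $\tilde{\gg}$, from which the Neumann inequality is extracted through the viscosity subsolution condition at $\pl\gO$ combined with obliqueness. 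A partition of unity assembles a global $u^\gep\in C^1(\ol\gO)$.

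Once such $u^\gep$ are in hand the limiting procedure is standard. The classical chain rule
\[
u^\gep(\eta(t))=u^\gep(\eta(0))+\int_0^t p_\gep(s)\cdot\DT{\eta}(s)\d s
\]
passes to the limit via uniform convergence $u^\gep\to u$ and $\DT{\eta}\in L^1(I,\R^n)$, yielding (\ref{eq:2.1}) and, in particular, the absolute continuity of $u\circ\eta$. For (\ref{eq:2.3}), linearity of $p\mapsto\gg(\eta)\cdot p$ together with weak-$*$ convergence of $p_\gep$ and $\beta_\gep\to 0$ imply, for any nonnegative $\phi\in L^1(I)$ supported in $I_\pl$,
\[
\int_{I_\pl}\phi\,\gg(\eta)\cdot p\d t=\lim_\gep\int_{I_\pl}\phi\,\gg(\eta)\cdot p_\gep\d t\leq \int_{I_\pl}\phi\, g(\eta)\d t,
\]
whence (\ref{eq:2.3}). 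For (\ref{eq:2.2}) weak-$*$ convergence is insufficient; here I would apply Mazur's lemma to extract convex combinations $\tilde p_\gep=\sum_j\gl_{\gep,j}p_{\gep_j}$ converging to $p$ strongly in $L^2(I,\R^n)$, hence a.e.\ along a further subsequence. Convexity of $H(x,\cdot)$ then yields
\[
H(\eta(t),\tilde p_\gep(t))\leq \sum_j\gl_{\gep,j}H(\eta(t),p_{\gep_j}(t))\leq\sum_j\gl_{\gep,j}\alpha_{\gep_j}\to 0\ \hb{a.e.},
\]
and continuity of $H$ gives (\ref{eq:2.2}).

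The main obstacle is the boundary regularization itself, i.e.\ producing $u^\gep$ that simultaneously respects the interior Hamilton--Jacobi inequality and the oblique Neumann inequality up to vanishing error. A plain mollifier respects only the first, while sup- or inf-convolutions run into the dual difficulty. The inward oblique shift is the correct device, but verifying that the resulting $\beta_\gep\to 0$ uniformly over the full boundary cover requires a careful interplay between the flattening diffeomorphism, the shift amount $\gd$, the mollifier scale $\gep$, and the modulus of continuity of $\gg$; it is here that hypotheses (A3) and (A4) enter decisively.
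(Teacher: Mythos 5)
The skeleton of your limiting step (compose approximate regular subsolutions with $\eta$, pass to a weak-$*$ limit, use Mazur plus convexity of $H$ for (\ref{eq:2.2}) and linearity for (\ref{eq:2.3})) is exactly how the paper concludes, but the heart of the theorem is the construction of the approximations, and there your recipe has a genuine gap. Mollifying the inward-shifted function only averages a.e.\ gradients of $u$ at \emph{interior} points, and those gradients carry no information about the oblique condition: the viscosity boundary inequality is a statement about superdifferentials of $u$ relative to $\ol\gO$ at points of $\pl\gO$, and it does not transfer to nearby interior gradients, so the claimed bound $\gg\cdot Du^\gep\leq g+\beta_\gep$ on $\pl\gO$ with $\beta_\gep\to0$ cannot be ``extracted'' as you assert. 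Concretely, take $n=1$, $\gO=(0,1)$, $H(x,p)=|p|-1$, $u(x)=-x$, $\gg(0)=-1$, $g(0)=-1/2$ (and, say, $\gg(1)=1$, $g(1)=10$): one checks that $u$ is a subsolution of (\ref{eq:1.5}), yet every average of interior gradients near $x=0$ equals $-1$, so any shift-and-mollify approximation has $\gg(0)\cdot Du^\gep(0)\approx 1$, which is not $\leq -1/2+\beta_\gep$. Your closing paragraph acknowledges this as ``the main obstacle,'' but it is not a technical detail to be checked: it is precisely the content the proof must supply, and the shift-plus-mollification device cannot supply it.

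What the paper does instead, after the same localization and flattening, is a two-step regularization in which the boundary datum is built into the approximation rather than hoped for. First, $\gg$ and $g$ are smoothed (Lemma \ref{lem:2.3} is what allows replacing $\gg$ by $\gg_\gep$ inside the viscosity boundary inequality), and a smooth $\psi_\gep$ with $\gg_\gep\cdot D\psi_\gep=g_\gep+3\gep$ is produced by the method of characteristics (Lemma \ref{lem:2.4}); subtracting $\psi_\gep$ reduces to a subsolution with \emph{strictly negative} Neumann data $-\gep$. Second, one performs a sup-convolution with the anisotropic kernel $\gz(\gg(x),x-y)$ of Lemma \ref{lem:2.6}: the identity $\xi\cdot D_z\gz(\xi,z)=(e_n\cdot\xi)(e_n\cdot z)$ makes the regularized function satisfy $\gg_\gep\cdot Dw\leq\gep$ on a full strip across the flattened boundary essentially by construction, while the strict negativity of the reduced data rules out the Neumann alternative at boundary touching points and thus preserves the interior inequality $G_\gep\leq\gep$ (Lemma \ref{lem:2.5}). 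Note also that this output is only Lipschitz, not $C^1$, which is why the paper needs the Clarke-differential chain rule along $\eta$ (Lemma \ref{lem:2.2}) together with convexity before running the weak-$*$/Mazur limit you describe; insisting on $C^1$ approximations satisfying both inequalities is an extra demand your construction does not justify either.
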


\begin{lem}\label{lem:2.2}Let $U$ be an open subset of $\R^n$, $\phi\in\Lip(U)$ 
and $\eta\in\AC(I,\,\R^n)$.    
Assume that $\eta(t)\in U$ for all $t\in I$.  
Then there exists a function $q\in L^\infty(I,\,\R^n)$ such that
\begin{align*} 
&q(t)\in \pl_{{\rm C}}u(\eta(t)) \ \ \hb{ a.e. }t\in I,\\
&\fr{\d}{\d t}u\circ \eta(t)=q(t)\cdot \DT{\eta}(t)   
\ \ \hb{ a.e. }t\in I.
\end{align*}
Here $\pl_{{\rm C}}u(x)$ stands for the Clarke differential of $u$ at $x$, 
which is defined as 
\[
\pl_{{\rm C}}u(x)=\bigcap_{r>0}Du(x;r), 
\]
where $Du(x;r)$ denotes the closed convex hull of 
\[
\{Du(y)\mid y\in\R^n,\,|y-x|<r, u \hb{ is differentiable at }y\}. 
\]
\end{lem}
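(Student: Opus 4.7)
The plan is to combine Rademacher's theorem with a measurable selection argument, anchored by the Clarke-type chain rule for Lipschitz--absolutely continuous compositions: if $u$ is Lipschitz on $U$ and $\eta\in\AC(I,\R^n)$ stays inside $U$, then $u\circ\eta$ is absolutely continuous on $I$, and for a.e.\ $t\in I$ there exists $q\in\pl_{\rm C}u(\eta(t))$ with $q\cdot\DT{\eta}(t)=\fr{\d}{\d t}u\circ\eta(t)$. Granted this, a measurable selection finishes the proof. Throughout, let $L$ denote the Lipschitz constant of $u$ on a fixed compact neighbourhood of $\eta(I)$ contained in $U$; then $\pl_{\rm C}u(x)$ is a nonempty, convex, compact subset of $\overline{B_L}$ for every $x$ in that neighbourhood, and the map $x\mapsto\pl_{\rm C}u(x)$ is upper semicontinuous with closed graph. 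Since $u\circ\eta$ is the composition of a Lipschitz function with an absolutely continuous curve, it is absolutely continuous on $I$, so both $\fr{\d}{\d t}(u\circ\eta)(t)$ and $\DT{\eta}(t)$ exist for a.e.\ $t\in I$.

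The technical heart of the argument is the pointwise chain rule. Fix $t\in I$ at which $\DT{\eta}(t)$ and $\fr{\d}{\d t}(u\circ\eta)(t)$ both exist and the Lebesgue density at $\eta(t)$ of the differentiability set of $u$ equals one. Choose differentiability points $y_k$ of $u$ with $y_k\to\eta(t)$ and with $Du(y_k)$ converging to some $q$; this is possible since $|Du(y_k)|\leq L$, and $q\in\pl_{\rm C}u(\eta(t))$ by the very definition of the Clarke differential. A first-order expansion of $u(\eta(t+h))-u(\eta(t))$ exploiting the density-one property at $\eta(t)$ and Rademacher's theorem, followed by division by $h$ and passage to the limit $h\to 0$, delivers the identity $\fr{\d}{\d t}(u\circ\eta)(t)=q\cdot\DT{\eta}(t)$. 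Alternatively, one may invoke the chain rule of Theorem~2.5.1 in Clarke's monograph directly.

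With the pointwise chain rule in hand, set
\[
F(t):=\{q\in\pl_{\rm C}u(\eta(t))\mid q\cdot\DT{\eta}(t)=\tfrac{\d}{\d t}(u\circ\eta)(t)\},
\]
which is nonempty, closed and convex-valued almost everywhere on $I$. Upper semicontinuity of $\pl_{\rm C}u$ combined with measurability of the maps $t\mapsto \eta(t)$, $\DT{\eta}(t)$ and $\fr{\d}{\d t}(u\circ\eta)(t)$ makes $F$ a measurable multifunction, so the Kuratowski--Ryll-Nardzewski selection theorem produces a measurable selection $q$. The uniform bound $|q(t)|\leq L$ places it in $L^\infty(I,\R^n)$, as required.

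The main obstacle is the pointwise chain rule in the middle paragraph: the delicate point is to exhibit a single $q\in\pl_{\rm C}u(\eta(t))$ that realizes the derivative $\fr{\d}{\d t}(u\circ\eta)(t)$ exactly, rather than merely bracketing it by Dini-type one-sided inequalities. Here convexity of $\pl_{\rm C}u(\eta(t))$ and genuine two-sided differentiability of $u\circ\eta$ at $t$ are essential; once they are in place, the measurability of $F$ and the $L^\infty$ bound on the selection come for free from standard selection tools.
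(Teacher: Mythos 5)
Your overall architecture (an a.e.\ pointwise chain rule followed by a measurable selection) is workable, and since the paper states Lemma \ref{lem:2.2} without proof there is no in-paper argument to compare with; but the step you yourself flag as the heart of the matter is genuinely missing, and the mechanism you sketch for it does not work. Full measure of the differentiability set of $u$ makes every point a density point of that set, so the density hypothesis carries no information, and it does not yield a first-order expansion of $u(\eta(t+h))-u(\eta(t))$: the remainders in the expansions at nearby differentiability points $y_k$ are not uniform in $k$, and $\eta(t+h)$ need not be a differentiability point at all. More seriously, an arbitrary cluster point $q$ of $Du(y_k)$ need not satisfy the desired identity. Take $n=1$, $C$ a fat Cantor set, $u(x)=\dist(x,C)$ and $\eta(t)=t$: for a.e.\ $t\in C$ one has $\fr{\d}{\d t}(u\circ\eta)(t)=0$ and $\DT{\eta}(t)=1$, so the only admissible $q$ is $0$, yet if the $y_k$ are chosen in the complementary gaps then $Du(y_k)\in\{\pm1\}$ and every cluster point violates the identity; thus the choice of $q$ must be tailored to $t$, which is exactly what your sketch does not do. The alternative you offer does not close the gap either: Theorem 2.5.1 of Clarke's monograph is the gradient formula for $\pl_{{\rm C}}u$, not a chain rule, and Clarke's chain rules assume strict differentiability of the inner map (or give only convex-hull inclusions), so they cannot be invoked directly along a merely absolutely continuous curve.

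The fix is precisely the ``Dini-type bracketing'' you set aside. At a.e.\ $t$ both $v:=\DT{\eta}(t)$ and $\alpha:=\fr{\d}{\d t}(u\circ\eta)(t)$ exist; since $\eta(t\pm h)=\eta(t)\pm hv+o(h)$ and $u$ is Lipschitz near $\eta(I)$, the right-hand difference quotient gives $\alpha\leq u^\circ(\eta(t);v)$ and the left-hand one gives $-\alpha\leq u^\circ(\eta(t);-v)$, where $u^\circ$ denotes Clarke's generalized directional derivative. Because $u^\circ(x;w)=\max\{q\cdot w\mid q\in\pl_{{\rm C}}u(x)\}$ and $\{q\cdot v\mid q\in\pl_{{\rm C}}u(x)\}$ is a compact interval (a linear image of a compact convex set), the number $\alpha$ is attained by some $q\in\pl_{{\rm C}}u(\eta(t))$; this is exactly where two-sided differentiability of $u\circ\eta$ and convexity and compactness of $\pl_{{\rm C}}u(\eta(t))$ enter, as you anticipated. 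With this in place your selection step is fine (the multifunction $F$ has closed values and measurable graph, so the Kuratowski--Ryll-Nardzewski or Aumann selection theorem applies, and $|q(t)|\leq L$ gives $q\in L^\infty(I,\R^n)$). Alternatively, one can avoid both the pointwise argument and selection theorems by the standard mollification route: apply the smooth chain rule to $u_\gep=u*\rho_\gep$, so that $u_\gep(\eta(t))-u_\gep(\eta(s))=\int_s^t Du_\gep(\eta(r))\cdot\DT{\eta}(r)\d r$, extract a weak-$*$ limit of $Du_\gep(\eta(\cdot))$ in $L^\infty(I,\R^n)$, and use Mazur's lemma together with the upper semicontinuity of $x\mapsto\pl_{{\rm C}}u(x)$ to conclude that the limit lies in $\pl_{{\rm C}}u(\eta(t))$ a.e.; this is the usual way lemmas of this type are proved in the literature the paper relies on.
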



In what follows, given a vector $w\in\R^n$ we denote by the symbol $w^*$ 
the function given by $x\mapsto w\cdot x$. Typically we write  
$\{e_n^*\leq 0\}$ for the set 
$\{x\in\R^n\mid e_n\cdot x \leq 0\}$.  
Let $\phi$ be a function defined on a subset $V$ of $\R^n$. For 
$x\in V$, we denote 
by $D_V^+\phi(x)$ the superdifferential of $\phi$ at $x$, i.e., 
the set of points $p\in\R^n$ such that 
\[
\phi(y)\leq \phi(x)+p\cdot (y-x)+o(|y-x|) \ \ \hb{ as }y\in V \ \hb{ and } \ y\to x. 
\]  
When $V$ is a neighborhood of $x$, we write just $D^+\phi(x)$ for $D_V^+\phi(x)$.

\begin{lem}\label{lem:2.3}Let $r>0$ and $v\in\Lip(B_r\cap\{e_n^*\leq 0\})$. 
Let $L>0$ be a Lipschitz constant of $v$. 
Let $\gg_i\in\R^n$, with $i=1,2$, and $\gep>0$, and assume  
that $\gg_i\cdot e_n\geq 0$ for $i=1,2$ and $|\gg_1-\gg_2|\leq \gep/L$.  
Let $a\in\R$ and assume that 
\[ 
\min\{|q|-L,\, \gg_1\cdot q-a\}\leq 0 \ \ \hb{  
for all }q\in D^+_{B_r\cap\{e_n^*\leq 0\}}v(0).
\]
Let $p\in D^+_{B_r\cap\{e_n^*\leq 0\}}v(0)$ be such that $\gg_1\cdot p\leq a$. 
Then we have $\gg_2\cdot p\leq a+\gep$.
\end{lem}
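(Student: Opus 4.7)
The plan is a case analysis on the size of $|p|$, using the decomposition $p = p' + s e_n$ with $p' \cdot e_n = 0$ (and similarly $\gamma_j = \gamma_j' + \gamma_j^n e_n$ for $j=1,2$). In the easy case $|p|\leq L$, Cauchy--Schwarz gives
\[
(\gamma_2 - \gamma_1)\cdot p \leq |\gamma_2 - \gamma_1|\,|p| \leq (\varepsilon/L)\cdot L = \varepsilon,
\]
so that $\gamma_2\cdot p \leq \gamma_1\cdot p + \varepsilon \leq a + \varepsilon$, and the lemma follows at once.

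The substantive case is $|p| > L$. Here I would first invoke the boundary Lipschitz analysis: the estimate $|v(y) - v(0)|\leq L|y|$ on $B_r\cap\{e_n^*\leq 0\}$ combined with the superdifferential inequality yields $p\cdot h \geq -L|h|$ for every $h$ in the tangent cone $\{h:h_n\leq 0\}$, which forces $|p'|\leq L$ and $s\leq L$. A slightly finer look shows that $s\geq 0$ would give $|p|\leq L$ (by taking the supremum of $p\cdot \xi$ over unit $\xi$ with $\xi_n\geq 0$), so in fact $s<0$. I would next exploit the recession direction of $D^+v(0)$: since adding $-\tau e_n$ with $\tau\geq 0$ only weakens the superdifferential inequality on $\{y_n\leq 0\}$, every shift $p - \tau e_n$ still lies in $D^+v(0)$. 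Because $s<0$, the quantity $|p-\tau e_n|^2 = |p'|^2 + (s-\tau)^2$ is nondecreasing in $\tau\geq 0$, so the hypothesis applied to these shifted points yields $\gamma_1\cdot(p - \tau e_n)\leq a$ for every $\tau\geq 0$.

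With these ingredients, I split
\[
(\gamma_2 - \gamma_1)\cdot p = (\gamma_2 - \gamma_1)'\cdot p' + (\gamma_2^n - \gamma_1^n)\,s
\]
and bound the tangential piece by $|(\gamma_2 - \gamma_1)'|\cdot L \leq \varepsilon$ using $|p'|\leq L$. When $\gamma_2^n\geq \gamma_1^n$, the normal term $(\gamma_2^n - \gamma_1^n)s$ is nonpositive (since $s<0$), and the conclusion follows immediately from $\gamma_1\cdot p\leq a$. The main obstacle is the complementary subcase $\gamma_2^n < \gamma_1^n$, where the normal contribution is positive and, taken in isolation, is unbounded a priori. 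I expect to close this subcase by exploiting the slack in $\gamma_1\cdot p\leq a$: when $s$ is very negative, the identity $\gamma_1\cdot p = \gamma_1'\cdot p' + \gamma_1^n s$ together with $\gamma_1^n\geq 0$ forces $\gamma_1\cdot p$ to be much smaller than $a$, and the required bound $\gamma_2\cdot p\leq a+\varepsilon$ can be obtained by combining the shifted-point hypothesis $\gamma_1\cdot(p - \tau e_n)\leq a$ for a carefully chosen $\tau\geq 0$ with the tangential estimate, so that the positive normal excess is absorbed into this gap.
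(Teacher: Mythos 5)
Your preliminary observations are correct (the case $|p|\leq L$ is immediate; $|p'|\leq L$ where $p=p'+se_n$; and $s<0$ whenever $|p|>L$), but the argument you sketch for the decisive subcase $\gamma_2\cdot e_n<\gamma_1\cdot e_n$ cannot close, because the only extra points of the superdifferential you use are the shifts $p-\tau e_n$, $\tau\geq 0$, and the hypothesis applied there is vacuous: $\gamma_1\cdot(p-\tau e_n)=\gamma_1\cdot p-\tau\,\gamma_1\cdot e_n\leq a$ is an automatic consequence of $\gamma_1\cdot p\leq a$ and $\gamma_1\cdot e_n\geq 0$, so it carries no information beyond what you already have. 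In fact the data you retain at that stage ($|p'|\leq L$, $s<0$, $\gamma_1\cdot p\leq a$, $\gamma_i\cdot e_n\geq 0$, $|\gamma_1-\gamma_2|\leq\varepsilon/L$) do not imply the conclusion at all: take $p=-10L\,e_n$, $\gamma_1=(\varepsilon/L)e_n$, $\gamma_2=0$, $a=\gamma_1\cdot p=-10\varepsilon$; all of these constraints hold, yet $\gamma_2\cdot p=0>a+\varepsilon$. Such a configuration is of course excluded by the lemma's hypothesis, but only through its application to elements of $D^+_{B_r\cap\{e_n^*\leq 0\}}v(0)$ other than the ray $\{p-\tau e_n:\tau\geq0\}$ -- precisely the elements your plan never accesses.

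The missing idea, which is the heart of the paper's proof, is to move in the \emph{opposite} normal direction $+e_n$. This is not a formal recession property (adding $+\tau e_n$ strengthens, rather than weakens, the superdifferential inequality on $\{e_n^*\leq 0\}$), and it is established by a genuine argument: choosing a $C^1$ test function $\phi$ with $D\phi(0)=p$ touching $v$ from above, introducing the critical slope $t_0=\lim_{\delta\to0+}\inf\{t:\max_{\bar B_\delta\cap\{e_n^*\leq0\}}(v-\phi-te_n^*)>0\}$, and running a penalization with $\beta(e_n\cdot x)^2$ to show both that $|p+t_0e_n|\leq L$ and that $p+t_0e_n$ lies in the closure of $P:=D^+_{B_r\cap\{e_n^*\leq 0\}}v(0)$. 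Then, since $\bar P$ is convex, the whole segment $\{p+te_n:t\in[0,t_0]\}$ lies in $\bar P$, and the hypothesis splits $[0,t_0]$ into the two closed sets $\{t:|p+te_n|\leq L\}$ and $\{t:\gamma_1\cdot(p+te_n)\leq a\}$, which must intersect at some $\tau$; finally $\gamma_2\cdot e_n\geq0$ and $\tau\geq0$ give $\gamma_2\cdot p\leq\gamma_2\cdot(p+\tau e_n)\leq\gamma_1\cdot(p+\tau e_n)+|\gamma_2-\gamma_1|\,|p+\tau e_n|\leq a+\varepsilon$. Without some substitute for this $+e_n$ step (which uses the Lipschitz bound on the half-ball in an essential way), your subcase $\gamma_2\cdot e_n<\gamma_1\cdot e_n$ remains open.
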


\begin{proof} We set $P=D^+_{B_r\cap\{e_n^*\leq 0\}}v(0)$ and note that 
\begin{equation}\label{eq:2.4}
\min\{|q|-L,\, \gg_1\cdot q-a\}\leq 0 \ \ \hb{  for all }q\in \iol{P}.
\end{equation}
Note as well that $P$ and $\iol{P}$ are is convex. 

We may assume that $v$ is defined and Lipschitz continuous 
on $\iol{B}_r\cap\{e_n^*\leq 0\}$.  
Fix any $p\in D_{B_r\cap\{e_n^*\leq 0\}}^+v(0)$, and choose 
a function $\phi\in C^1(\iol{B}_r\cap\{e_n^*\leq 0\})$ such that $D\phi(0)=p$ 
and that $v-\phi$ attains a strict maximum at the origin. 
We may assume by adding a constant to $\phi$ that $(v-\phi)(0)=0$

For any $0<\gd\leq r$, we set 
\begin{align*}
&\Theta_\gd=\{t\in\R\mid \max_{\iol{B}_\gd\cap\{e_n^*\leq 0\}}(v-\phi-t e_n^*)>0 \}, \\
&t_\gd=\inf \Theta_\gd. 
\end{align*}
It is obvious that $(L+|p|,\,\infty)\subset \Theta_\gd \subset (0,\infty)$ and 
that if $0<\gd_1<\gd_2\leq r$, then $\Theta_{\gd_1}\subset \Theta_{\gd_2}$. Therefore, 
if $0<\gd_1<\gd_2\leq r$, then $L+|p|\geq t_{\gd_1}\geq t_{\gd_2}\geq 0$. 
We set 
\[
t_0=\lim_{\gd\to 0+}t_\gd, 
\]
and observe that 
\[
t_0 =\sup_{\gd>0}t_\gd\in\left[\,0,\,L+|p|\,\right].
\]

We note that if $\mu>0$, then  
\begin{equation}\label{eq:2.5}
\max_{\iol{B}_\gd\cap\{e_n^*\leq 0\}}(v-\phi-(t_0+\mu)e_n^*)>0 \ \ \hb{ for }0<\gd\leq r. 
\end{equation}
Fix any $\mu>0$, and for $\gb>0$ we set 
\[
\Phi_\gb(x)=v(x)-\phi(x)-(t_0+2\mu)e_n\cdot x-\gb(e_n\cdot x)^2 \ \ \hb{ for }x\in
\iol{B}_r\cap\{e_n^*\leq 0\}. 
\]
Let $x_\gb\in\iol{B}_r\cap\{e_n^*\leq 0\}$ be a maximum point of $\Phi_\gb$. 
Since $\Phi_\gb(x_\gb)\geq \Phi_\gb(0)=0$, we have 
\[
\gb(e_n\cdot x_\gb)^2\leq v(x_\gb)-\phi(x_\gb)-(t_0+2\mu) e_n\cdot x_\gb 
\]
from which we see that $x_\gb\cdot e_n \to 0$ as $\gb\to\infty$. 
Moreover, we deduce that  
\[
\liminf_{\gb\to\infty} (v-\phi)(x_\gb)\geq 0, 
\]
from which we conclude that $x_\gb\to 0$ as $\gb\to\infty$. 

Observe that if $-\mu/\gb<e_n\cdot x<0$, then 
\[
-\mu e_n\cdot x-\gb(e_n\cdot x)^2=|e_n\cdot x|(\mu-\gb|e_n\cdot x|)>0.
\]
From this and (\ref{eq:2.5}) we see that 
$\Phi_\gb(x_\gb)=\max_{\iol{B}_r\cap\{e_n^*\leq 0\}}\Phi_\gb >0$. 
In particular, we have 
$e_n\cdot x_\gb>0$. Thus, by choosing $\gb>0$ large enough, we may assume that 
$x_\gb\in B_r\cap\{e_n^*<0\}$, and consequently we have 
\[
0\in D^+\Phi_\gb(x_\gb)=D^+v(x_\gb)-D\phi(x_\gb)-(t_0+2\mu)e_n-2\gb(e_n\cdot x_\gb)e_n.
\]
Thus, if $\gb$ is large enough, then we have   
\begin{equation}\label{eq:2.6}
|D\phi(x_\gb)+(t_0+2\mu)e_n+2\gb(e_n\cdot x_\gb)e_n|\leq L.
\end{equation} 
 
Next, note that there is a constant $\rho=\rho_\mu\in(0,\,r)$ such that 
$t_0-\mu\leq t_\rho$, that is, 
\begin{equation}\label{eq:2.7}
\max_{\iol{B}_\rho\cap\{e_n^*\leq 0\}}(v-\phi-(t_0-\mu)e_n^*)=0. 
\end{equation}
By choosing $\gb$ large enough, we may assume that $x_\gb\in B_\rho$. Then we have 
\[
v(x_\gb)-\phi(x_\gb)-(t_0-\mu)e_n\cdot x_\gb 
\leq 0 <\Phi_\gb(x_\gb),
\]
and therefore 
\[
\gb(e_n\cdot x_\gb)^2<-3\mu e_n\cdot x_\gb=3\mu|e_n\cdot x_\gb|,
\] 
which yields 
\begin{equation}\label{eq:2.8}
\gb|e_n\cdot x_\gb|< 3\mu. 
\end{equation}

Due to (\ref{eq:2.7}), the function $\Psi:=v-\phi-(t_0-\mu)e_n^*$ attains a 
maximum at the origin over $B_\rho\cap\{e_n^*\leq 0\}$, and hence we have 
\[
0\in D^+_{B_\rho \cap\{e_n^*\leq 0\}}\Psi(0)
=P-D\phi(0)-(t_0-\mu)e_n. 
\] 
That is, we have
\begin{equation}\label{eq:2.9}
p+(t_0-\mu)e_n\in P.
\end{equation}

Sending $\gb\to\infty$ first and then $\mu\to 0$ and using (\ref{eq:2.8}),  
we obtain from (\ref{eq:2.6}) 
\[
|p+t_0 e_n|\leq L. 
\]
Also, from (\ref{eq:2.9}) we see that  
\[
p+t_0e_n \in\iol{P}.  
\]

We set $A=\{t\in[0,\,t_0]\mid |p+te_n|\leq L\}$ 
and $B=\{t\in[0,\,t_0]\mid \gg_1\cdot (p+te_n)\leq a\}$. 
By the convexity 
of $\iol{P}$, since $p,\,p+t_0e_n\in\iol{P}$, 
we see that $p+te_n\in\iol{P}$ for $t\in[0,\,t_0]$. In view of 
(\ref{eq:2.4}), we have $[0,\,t_0]=A\cup B$. Since 
$A$ and $B$ is closed sets, setting $\tau=\min B$, we have $\tau\in A\cap B$.   
Hence, we get 
\begin{align*}
\gg_2\cdot p 
\leq\aln  \gg_2\cdot (p+\tau e_n)
\leq \gg_1\cdot(p+\tau e_n)+|\gg_2-\gg_1||p+\tau e_n| \\
\leq\aln  a+L|\gg_2-\gg_1|\leq a+\gep,
\end{align*}
which completes the proof. 
\end{proof}

\begin{lem}\label{lem:2.4}Let $\gg\in C^\infty(\R^n)$ and $g\in C^\infty(\R^n)$ 
satisfy 
\[
\inf_{x\in\R^n}e_n\cdot\gg(x)>0 \  \ \hb{ and } \ \ 
\sup_{x\in\R^n}(|\gg(x)|+|g(x)|)<\infty. 
\]
Then there exists a function $\psi\in C^\infty(\R^n)$ such that 
\[
\gg(x)\cdot D\psi(x)=g(x) \ \ \hb{ for }x\in\R^n, 
\]
\end{lem}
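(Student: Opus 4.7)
The equation $\gg\cdot D\psi = g$ is a linear first-order transport equation, and the hypothesis $\gd_0:=\inf_{x}e_n\cdot\gg(x)>0$ makes the characteristic vector field $\gg$ uniformly transverse to $\Sigma:=\{e_n^*=0\}$. My plan is to prescribe $\psi\equiv 0$ on $\Sigma$ and extend by integrating $g$ along characteristics.

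Concretely, let $X(\cdot,\cdot)\in C^\infty(\R^n\tim\R;\R^n)$ denote the flow of $\gg$, defined by $\pl_t X(x,t)=\gg(X(x,t))$ and $X(x,0)=x$; global existence for all $t\in\R$ follows from $\sup_{\R^n}|\gg|<\infty$, and the group identity $X(X(x,t),s)=X(x,t+s)$ is automatic. For each fixed $x$, $t\mapsto e_n\cdot X(x,t)$ has derivative $e_n\cdot\gg(X(x,t))\geq \gd_0$, hence is a smooth strictly increasing bijection of $\R$ onto $\R$, so there is a unique $\tau(x)\in\R$ with $e_n\cdot X(x,\tau(x))=0$. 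The implicit function theorem, applied to $F(x,\tau):=e_n\cdot X(x,\tau)$ whose $\tau$-derivative is bounded below by $\gd_0$, yields $\tau\in C^\infty(\R^n)$.

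I then set $\psi(x):=-\int_0^{\tau(x)} g(X(x,s))\,\d s$, which is smooth since $\tau$, $X$, and $g$ are. To verify $\gg(x)\cdot D\psi(x)=g(x)$, note first that uniqueness of $\tau$ together with the group property gives $\tau(X(x,t))=\tau(x)-t$. The change of variable $u=s+t$ in the integral defining $\psi(X(x,t))$ then yields
\[
\psi(X(x,t))=-\int_{t}^{\tau(x)} g(X(x,u))\,\d u,
\]
and differentiating at $t=0$, the right side produces $g(x)$ by the fundamental theorem of calculus, while the left side equals $D\psi(x)\cdot\gg(x)$ by the chain rule. The argument is essentially obstruction-free: the only points that actually use the hypotheses are global existence and smoothness of the flow (needing $\gg\in C^\infty$ and bounded) and smoothness of $\tau$ (needing uniform transversality $e_n\cdot\gg\geq\gd_0>0$); the bound on $|g|$ plays no role in the construction and presumably is stated only to keep $\psi$ Lipschitz, which will matter in the subsequent application.
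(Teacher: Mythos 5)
Your proof is correct and follows essentially the same route as the paper: both solve the linear PDE by the method of characteristics, using the uniform transversality $\inf e_n\cdot\gg>0$ to define the smooth hitting time $\tau$ via the implicit function theorem, setting $\psi(x)=-\int_0^{\tau(x)}g$ along the flow, and differentiating the flow/group identity at $t=0$ (your identity $\tau(X(x,t))=\tau(x)-t$ is just the paper's splitting of the integral in disguise). Your closing remark is also consistent with the paper, which likewise does not use the bound on $|g|$ in this construction.
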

  
The above assertion is well-known, but 
for completeness we give a proof.  
  
\begin{proof} The idea is to solve the initial value problem for the linear PDE
\begin{equation} \label{eq:2.10}\left\{
\begin{aligned}
&\gg(x)\cdot D\psi(x) =g(x)  \ \ \hb{ in }\R^n, \\
&\psi(x)=0 \ \ \ \hb{ if }e_n\cdot x=0. 
\end{aligned}
\right.\end{equation}
For $(x,t)\in\R^n\tim\R$ let $\Phi(x,t)$ denote the (unique) solution of 
the initial value problem for ODE
\[\left\{
\begin{aligned}
&\Phi_t(x,t)=\gg(\Phi(x,t)) \ \ \hb{ for }t\in \R,\\
&\Phi(x,0)=x,
\end{aligned}
\right.
\]
where $\Phi_t:=\pl\Phi/\pl t$. By the standard ODE theory we see that $\Phi\in C^\infty(\R^{n+1})$. 
Moreover, since 
\[
0<\inf_{\R^n}e_n\cdot\gg\leq 
e_n\cdot \gg(\Phi(x,t))=e_n\cdot \Phi_t(x,t)\leq \sup_{\R^n}|\gg|<\infty, 
\]
we see that for each $x\in\R^n$, there exists a unique $\tau(x)\in\R$ such that 
$e_n\cdot \Phi(x,\,\tau(x))=0$. Then the implicit function theorem
guarantees that $\tau\in C^\infty(\R^n)$.

We define $\psi\in C^\infty(\R^n)$ by setting 
\[
\psi(x)=-\int_0^{\tau(x)}g(\Phi(x,t))\d t.  
\]
It is obvious that $\psi(x)=0$ if $e_n\cdot x=0$. 
For $r\in\R$ we have
\begin{align*}
\psi(x)=\aln -\int_0^r g(\Phi(x,t))\d t -\int_r^{\tau(x)}g(\Phi(x,t))\d t\\
=\aln -\int_0^r g(\Phi(x,t))\d t-\int_0^{\tau(x)-r}g(\Phi(\Phi(x,\,r),\,t))\d t\\
=\aln-\int_0^r g(\Phi(x,t))\d t+\psi(\Phi(x,\,r)). 
\end{align*}
Differentiating the above by $r$ and setting $r=0$, we get 
\[
0=-g(x)+\gg(x)\cdot D\psi(x). 
\]
Thus the function $\psi$ is a solution of (\ref{eq:2.10}), which completes the proof.  
\end{proof}

In the next lemma, we assume that the vector field $\gg$ is of class $C^1$.

\begin{lem}\label{lem:2.5}Let $r>0$, 
$G\in C(\iol{B}_r\tim\R^n,\,\R)$ and $\gg\in C^1(\iol{B}_r,\,\R^n)$. 
Assume that $G$ satisfies (A2), with $\gO$ replaced by $B_r$, and that 
$e_n\cdot \gg(x)>0$ for all $x\in\iol{B}_r$. 
Let $v\in C(\iol{B}_r\cap\{e_n^*\leq 0\})$ and $\gep>0$, and assume 
that $v$ is a subsolution of
\begin{equation}\label{eq:2.11}
\left\{
\begin{aligned}
&G(x,\,Dv)=0 \ \ \hb{ in }B_r\cap\{e_n^*<0\},\\
&D_\gg v(x)=-\gep \  \ \hb{ on }B_r\cap\{e_n^*=0\}.  
\end{aligned}
\right.
\end{equation}
Then there exists 
a function $w\in \Lip(B_{r/2}\cap\{e_n^*<\gd\})$, with $\gd>0$,  such that 
\[
|v(x)-w(x)|<\gep \ \ \hb{ for }x\in B_{r/2}\cap\{e_n^*\leq 0\}, 
\]
and $\phi$ is both a 
subsolution of 
\[
G(x,\,Dw(x))=\gep \ \ \hb{ in }B_{r/2}\cap\{e_n^*<\gd\}
\] 
and of 
\[
\gg(x)\cdot Dw(x)=\gep \ \ \hb{ in }B_{r/2}\cap\{|e_n^*|<\gd\}.
\]
\end{lem}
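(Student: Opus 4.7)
The plan is to construct $w$ by extending $v$ across the hyperplane $\{e_n^*=0\}$ along the flow of $\gg$, with the value decreasing at rate $\gep$ in the flow direction. First I would use coercivity of $G$ together with the subsolution assumption to deduce that $v$ is Lipschitz on $\iol{B}_r\cap\{e_n^*\leq 0\}$; let $L$ be a Lipschitz constant. Then mollify the $C^1$ field $\gg$ to obtain a smooth field $\tilde\gg$ on a neighborhood of $\iol{B}_r$ with $|\tilde\gg-\gg|$ small (of order $\gep/L$) and $\inf e_n\cdot\tilde\gg>0$, so that Lemma \ref{lem:2.4} is available for $\tilde\gg$. Denote by $\Phi$ the flow of $\tilde\gg$, and via the implicit-function-theorem argument of Lemma \ref{lem:2.4} define smooth functions $\sigma(x)$ and $\pi(x):=\Phi(x,-\sigma(x))$ on a neighborhood of $\{e_n^*=0\}$ so that $e_n\cdot\pi(x)=0$ and $\sigma(x)$ has the same sign as $e_n\cdot x$.

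I would then take
\[
w(x)=\begin{cases} v(x) & \hb{if } e_n\cdot x\leq 0,\\ v(\pi(x))-\gep\,\sigma(x) & \hb{if } 0<e_n\cdot x<\gd,\end{cases}
\]
for $\gd>0$ to be chosen. Continuity across $\{e_n^*=0\}$ is immediate from $\pi(x)=x$ and $\sigma(x)=0$ there; Lipschitz regularity of $w$ on $B_{r/2}\cap\{e_n^*<\gd\}$ follows from that of $v$ together with smoothness of $\pi,\sigma$; and the estimate $|v-w|<\gep$ on $\{e_n^*\leq 0\}$ is trivial since $w\equiv v$ there. To get $\gg\cdot Dw\leq\gep$ on $B_{r/2}\cap\{|e_n^*|<\gd\}$, one observes that in $\{e_n^*>0\}$ the construction forces $\tilde\gg\cdot Dw=-\gep$ a.e., hence $\gg\cdot Dw\leq-\gep+|\gg-\tilde\gg|\,|Dw|\leq 0\leq\gep$; in the sliver $\{-\gd<e_n^*\leq 0\}$, where $w=v$, one invokes Lemma \ref{lem:2.3} applied to the boundary supergradients of $v$, using coercivity of $G$ to bound $|p|$ except when the boundary condition $\gg\cdot p\leq -\gep$ binds; in either case the analogous bound holds after shrinking $\gd$ if needed.

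The main obstacle is the verification of $G(x,Dw(x))\leq\gep$ on $\{e_n^*<\gd\}$. In $\{e_n^*<0\}$ one has $w=v$ and $G(\cdot,Dv)\leq 0\leq\gep$; but in $\{e_n^*>0\}$ the chain-rule computation $Dw(x)=D\pi(x)^T Dv(\pi(x))-\gep\,D\sigma(x)$ shows that $Dw(x)$ differs from $Dv(\pi(x))$ by a term along $\gg$ whose magnitude does not shrink with $\gd$, so one cannot simply invoke continuity of $G$ at boundary points where the condition $\gg\cdot Dv\leq -\gep$ binds rather than the PDE $G\leq 0$. The resolution is to exploit the exact identity $\tilde\gg\cdot Dw=-\gep$ enforced by the construction, together with Lemma \ref{lem:2.3} (used to transfer the boundary subsolution information from $\gg$ to $\tilde\gg$ with error $\gep$) and coercivity of $G$, to confine all relevant supergradients of $w$ to the region $\{G\leq\gep\}$ in a uniform thin strip; this yields the slightly relaxed inequality $G(x,Dw(x))\leq\gep$ throughout $B_{r/2}\cap\{e_n^*<\gd\}$.
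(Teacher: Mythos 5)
Your construction breaks down at two points, one of which is exactly the obstacle you flagged, and the proposed resolution of that obstacle is not a proof. First, the choice $w=v$ on $\{e_n^*\le 0\}$ already violates the conclusion: the lemma requires $w$ to be a subsolution of $\gg\cdot Dw=\gep$ on the \emph{two-sided} sliver $B_{r/2}\cap\{|e_n^*|<\gd\}$, in particular at interior points with $-\gd<e_n\cdot x<0$, where the only information about $v$ is $G(x,Dv)\le 0$; coercivity then bounds $|Dv|$ by $L$ but gives no bound on $\gg\cdot Dv$, and Lemma \ref{lem:2.3} cannot help because it only concerns the constrained superdifferential at points of $\{e_n^*=0\}$. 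Concretely, take $G(x,p)=|p|-2$, $\gg\equiv e_n$, $\gep=1/2$, $v(x)=e_n\cdot x$: this $v$ is a subsolution of (\ref{eq:2.11}), yet $\gg\cdot Dv\equiv 1>\gep$ arbitrarily close to the hyperplane, so no $\gd>0$ makes $w=v$ admissible below it. Second, in $\{0<e_n^*<\gd\}$ the identity $\tilde\gg\cdot Dw=-\gep$ plus a Lipschitz bound does not confine $Dw$ to $\{G\le\gep\}$; coercivity bounds gradients only where $G\le 0$ is already known, so that argument is circular. Indeed, with $\gg\equiv e_n$, $v(x)=-e_n\cdot x$ and the convex, coercive $G(p)=\max\{|p|-2,\ M(p_n+1)\}$ with $M$ large, $v$ is a subsolution of (\ref{eq:2.11}) for any $\gep\in(0,1]$, while your extension equals $-\gep\,e_n\cdot x$ above the hyperplane, so $Dw=-\gep e_n$ and $G(Dw)=M(1-\gep)\gg\gep$. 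The root cause is that your extension pairs the tangential gradient of the trace of $v$ with an artificial normal slope dictated by the flow, and nothing in the hypotheses places such a fabricated vector near the sublevel set $\{G\le 0\}$.

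The paper's proof avoids fabricated gradients entirely: it takes $w$ to be the sup-convolution $v^\gd(x)=\max_y\big(v(y)-\gd^{-1}\gz(\gg(x),x-y)\big)$ with the kernel $\gz$ of Lemma \ref{lem:2.6}, whose property $\xi\cdot D_z\gz(\xi,z)=(e_n\cdot\xi)(e_n\cdot z)$ guarantees that every supergradient of $v^\gd$ at $x$ is, up to $O(\gd)$ errors, a genuine (possibly constrained) supergradient of $v$ at the maximizer $y$; moreover, when $y$ lies on the hyperplane that same identity forces $\gg(y)\cdot q>-\gep$ for the relevant supergradient $q$ of $v$ at $y$ once $\gd$ is small, so the oblique alternative in the subsolution dichotomy is excluded, $G(y,q)\le 0$ must hold, and this transfers to $x$ with a modulus error; the identity also yields $\gg(x)\cdot p\le\gep$ for all supergradients of $v^\gd$ in a thin two-sided sliver, i.e., the sup-convolution bends $v$ on \emph{both} sides of the hyperplane, which is precisely what your unmodified $w=v$ cannot do. Some mechanism of this kind, evaluating the subsolution inequalities only at true supergradients of $v$ and modifying $v$ below the hyperplane as well, is the missing idea; the flow extension with prescribed decay rate $-\gep$ does not supply it.
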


We remark that, by definition, $v$ is a subsolution of (\ref{eq:2.11}) if and only if 
\[
\left\{
\begin{aligned}
&G(x,p)\leq 0 \ \ \hb{ for }p\in D^+ v(x) \ \hb{ and } \ x\in B_r\cap\{e_n^*<0\}, \\
&G(x,p)\wedge \left(\gg(x)\cdot p+\gep\right)\leq 0\ \ \hb{ for }p\in D^+_{B_r\cap\{e_n^*\leq 0\}}v(x) 
\ \hb{ and } \ x\in B_r\cap\{e_n^*=0\}.   
\end{aligned}\right.
\]

In order to prove the above lemma we need the following lemma.

\begin{lem}\label{lem:2.6}There exists a function 
$\gz\in C^\infty(\R^n_+\tim\R^n)$, where $\R^n_+:=\R^{n-1}\tim (0,\,\infty)=\{e_n^*>0\}$, such that
\[\left\{
\begin{aligned}
&\gz(\xi,tz)= t^2\gz(\xi,z) \ \ &&\hb{ for }(\xi,z,t)\in\R^n_+\tim\R^n\tim\R,\\
&\gz(\xi,z)> 0 \ \ &&\hb{ for }(\xi,z)\in\R^n_+\tim(\R^n\setminus\{0\}),\\
&\xi\cdot D_z\gz(\xi,z)=(e_n\cdot \xi)(e_n\cdot z) \ \ &&\hb{ for }(\xi,z)\in\R^n_+\tim\R^n.   
\end{aligned}
\right.
\]  
\end{lem}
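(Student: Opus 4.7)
The plan is to look for $\gz$ as a quadratic form in $z$, namely
\[
\gz(\xi,z) = z\cdot A(\xi) z,
\]
for a smooth family of symmetric matrices $A(\xi)$ on $\R^n_+$. With such an ansatz the $t^2$-homogeneity in $z$ is automatic, the positivity requirement reduces to demanding that each $A(\xi)$ be positive definite, and the PDE $\xi\cdot D_z\gz(\xi,z) = (e_n\cdot\xi)(e_n\cdot z)$, upon computing $D_z\gz = 2A(\xi)z$ and matching linear forms in $z$, collapses to the single linear-algebraic condition
\[
A(\xi)\xi = \tfrac{1}{2}(e_n\cdot\xi)\,e_n, \qquad \xi\in\R^n_+.
\]

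A natural candidate is to let $A(\xi)$ be the orthogonal projection onto $\xi^\perp$ plus a rank-one positive correction in the $e_n$-direction, which leads to the explicit formula
\[
\gz(\xi,z) = |z|^2 - \fr{(\xi\cdot z)^2}{|\xi|^2} + \fr{1}{2}(e_n\cdot z)^2.
\]
Smoothness on $\R^n_+\tim\R^n$ is immediate since $\xi\neq 0$ there, and homogeneity is manifest. The PDE reduces to a one-line check:
\[
\xi\cdot D_z\gz(\xi,z) = 2(\xi\cdot z) - 2(\xi\cdot z) + (e_n\cdot\xi)(e_n\cdot z) = (e_n\cdot\xi)(e_n\cdot z).
\]

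For positivity, I would decompose $z = P_{\xi^\perp}z + t\xi$, with $P_{\xi^\perp}z := z - (\xi\cdot z)|\xi|^{-2}\xi$; a short computation rewrites the formula for $\gz$ as
\[
\gz(\xi,z) = |P_{\xi^\perp}z|^2 + \fr{1}{2}(e_n\cdot z)^2,
\]
which is nonnegative and vanishes only if $z\in\R\xi$ and $e_n\cdot z = 0$ simultaneously. Since $\xi\in\R^n_+$ forces $e_n\cdot\xi>0$, the latter condition on the one-dimensional line $\R\xi$ yields $z=0$, and this is precisely where the open half-space hypothesis enters. I do not foresee any real obstacle: the entire content lies in guessing the quadratic ansatz, after which all three conditions reduce to short, routine verifications, made possible by the fact that the PDE is first order in $z$ with a right-hand side that is already linear in $z$.
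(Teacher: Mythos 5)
Your construction is correct, and it is worth noting that the paper itself contains no proof of this lemma: it simply defers to \cite[Lemma 2.3]{Ishii_weakKAM_10}, so your argument serves as a self-contained substitute rather than a variant of an in-text proof. The quadratic ansatz $\gz(\xi,z)=z\cdot A(\xi)z$ does exactly what you claim: with
\[
\gz(\xi,z)=|z|^2-\fr{(\xi\cdot z)^2}{|\xi|^2}+\fr12(e_n\cdot z)^2,
\]
smoothness on $\R^n_+\tim\R^n$ holds because $\xi\neq 0$ there, the $t^2$-homogeneity holds for all real $t$ since the expression is a quadratic form in $z$, the identity $\xi\cdot D_z\gz=(e_n\cdot\xi)(e_n\cdot z)$ is the one-line computation you give (equivalently $A(\xi)\xi=\tfrac12(e_n\cdot\xi)e_n$ with $A(\xi)$ symmetric), and the rewriting $\gz(\xi,z)=|P_{\xi^\perp}z|^2+\tfrac12(e_n\cdot z)^2$ shows strict positivity for $z\neq0$, precisely because $e_n\cdot\xi>0$ rules out a nonzero $z\in\R\xi$ with $e_n\cdot z=0$ — so the half-space restriction is used exactly where it must be. The construction in the cited reference is in the same spirit (a $z$-quadratic expression depending smoothly on $\xi$, built from the component of $z$ transverse to $\xi$ plus a multiple of $(e_n\cdot z)^2$), so your route is not conceptually different from the source, but it has the merit of being explicit, short, and verifiable without leaving this paper.
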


We refer the reader to \cite[Lemma 2.3]{Ishii_weakKAM_10} for a proof of the above lemma. 
For the proof of Lemma \ref{lem:2.5}, we follow that of \cite[Lemma 2.5]{Ishii_weakKAM_10}. 
   
\begin{proof}[Proof of Lemma \ref{lem:2.5}] In view of (A2) we may 
choose a constant $L>0$ so that 
\[
\{G(x,\,\cdot)\leq 0\}\subset \iol{B}_L \ \ \hb{ for }x\in \iol{B}_r.
\] 
It is easily seen that $v$ is Lipschitz continuous on $\iol{B}_r\cap 
\{e_n^*\leq 0\}$, with $L$ as its Lipschitz constant.  Let $\go_G$ be the modulus 
of continuity of the function $G$ on $B_r\tim B_L$.    

Let $\gz\in C^\infty(\R_+^n\tim\R^n)$ be the function given 
by Lemma \ref{lem:2.6}. 
We note by the homogeneity of the functions $\gz(\xi,\,\cdot)$ that 
\[
C_0^{-1}|z|^2\leq \gz(\xi,z)\leq C_0|z|^2, \qquad |D_\xi\gz(\xi,z)|  
\leq C_0|z|^2,  \qquad  
|D_z\gz(\xi,z)|\leq C_0|z|
\]
for all $(\xi,z)\in\R^n_+\tim\R^n$ and for some constant $1<C_0<\infty$.  
We set $\psi(x,\,y)=\gz(\gg(x),\,x-y)$ and note that 
\begin{align*}
D_x\psi(x,\,y)=&\,(D\gg(x))^{{\rm T}}D_\xi\gz(\gg(x),\,x-y)
+D_z\gz(\gg(x),x-y),\\
D_y\psi(x,\,y)=&\,-D_z\gz(\gg(x),\,x-y),
\end{align*}
where $A^{\rm T}$ denotes the transposed matrix of the matrix $A$. 
From these we get
\begin{equation}\label{eq:2.12}
|D_x\psi(x,y)+D_y\psi(x,y)|=\left|(D\gg(x))^{{\rm T}}D_\xi\gz(\gg(x),\,x-y)\right|
\leq C_0C_1|x-y|^2,  
\end{equation}
where $C_1>0$ is a bound of $|D\gg(x)|$ over $x\in \iol{B}_r$.

For $0<\gd<1$  
we define the sup-convolution $v^\gd\in C(\iol{B}_r)$ 
by
\[
v^\gd(x)=\max_{y\in \iol{B}_r\cap \{e_n^*\leq 0\}}
\left(v(y)-\fr{1}{\gd}\psi(x,\,y)\right).
\]
It is well-known and easy to see that $v^\gd(x)\to v(x)$ uniformly on 
$\iol{B}_r\cap\{e_n^*\leq 0\}$ as $\gd \to 0$ and that $v^\gd$ is Lipschitz 
continuous on $\iol{B}_r$.   

Henceforth we fix any $x\in B_{r/2}\cap\{e_n^*<\gd^2\}$, and    
choose a maximizer $y\in\iol{B}_r\cap\{e_n^*\leq 0\}$ 
of the above formula, so that we have  
\[
v^\gd(x)=v(y)-\fr{1}{\gd}\psi(x,\,y).   
\]
  
We collect here some estimates based on this choice of $x,\,y$.  
Let $\hat x$ denote the projection of $x$ onto the half space $\{e_n^*\leq 0\}$.
That is, $\hat x=x-(e_n\cdot x)e_n$ if $e_n\cdot x>0$ 
and $\hat x=x$ otherwise. 
Noting that $\hat x\in B_{r/2}\cap\{e_n^*\leq 0\}$ 
and $|x-\hat x|<\gd$, we get   
\[v^\gd(x)\geq v(\hat x)-\fr{1}{\gd}\psi(x,\,\hat x), 
\] 
and moreover 
\begin{align*}
\fr{1}{\gd}\psi(x,y)=\aln v(y)-v^\gd(x)
\leq v(y)-v(\hat x)+\fr 1\gd \psi(x,\,\hat x)\\       
\leq\aln L|y-\hat x|+\fr {C_0|\hat x-y|^2}\gd   
\leq L|x-y|+(L+C_0)\gd. 
\end{align*}
Since $\psi(x,y)\geq C_0^{-1}|x-y|^2$, we get
\[
|x-y|^2\leq C_0L\gd|x-y|+C_0(L+C_0)\gd^2,
\]
from which we deduce that   
\begin{equation}\label{eq:2.13}
|x-y|\leq C_2\gd,  
\end{equation}  
where \[
C_2=\fr{C_0L+\sqrt{(C_0L)^2+4C_0(C_0+L)}}{2}.  
\]    
We note from (\ref{eq:2.12}) that 
\begin{equation}\label{eq:2.14}
|D_x\psi(x,y)+D_y\psi(x,y)|\leq C_3\gd^2, 
\end{equation} 
where $C_3:=C_0C_1C_2^2$. By Lemma \ref{lem:2.6}, we get
\begin{equation} \label{eq:2.15} 
\gg(x)\cdot D_y\psi(x,y)=-
e_n\!\cdot\! \gg(x)\,e_n\!\cdot\!(x-y).
\end{equation}    
Also, we get
\begin{equation}  
|D_y\psi(x,y)|\leq C_0|x-y|\leq C_4\gd, \label{eq:2.16}
\end{equation}
where $C_4:=C_0C_2$. 
 
With $x$ and $y$ fixed as above, we show 
that if $\gd>0$ is sufficiently small, then 
\begin{equation}\label{eq:2.17}
G(x,\,p)\leq \gep \ \ \hb{ for }p\in D^+v^\gd(x). 
\end{equation}
We choose a constant $\gd_1\in(0,\,1)$ so that $C_2\gd_1<r/2$ and assume in what follows that 
$0<\gd<\gd_1$. By (\ref{eq:2.13}), we have $|x-y|<r/2$. Hence, we have 
$y\in B_r$. 

Fix any $p\in D^+v(x)$ and choose a function $\phi\in C^1(B_r\cap\{e_n\leq 0\})$ 
so that $v-\phi$ attains a maximum at $x$. 

We separate the argument into two cases.  
We first argue the case when $e_n\cdot y<0$. By a simple calculus, we have
\[
\fr{1}{\gd}D_y\psi(x,y)\in D^+v(y) \ \hb{ and } \ 
\fr 1\gd D_x\psi(x,y)+D\phi(x)=0. 
\] 
Hence, by assumption, we have 
\begin{align*}
0\geq \aln G\Big(y,\,\fr 1\gd D_y\psi(x,y)\Big)
\geq G(x,\,p)-\go_G(|x-y|)-\go_G\Big(\fr 1\gd|D_x\psi(x,y)+D_y\psi(x,y)|\Big)\\
\geq\aln G(x,\,p)-\go_G(C_2\gd)-\go_G(C_3\gd).  
\end{align*}
We choose a constant $\gd_2\in(0,1)$ so that $\go_G(C_2\gd_2)+\go_G(C_3\gd_2)<\gep$. 
Then, assuming that $0<\gd<\gd_1\wedge \gd_2$, we have $G(x,\,p)\leq \gep$. 

Next, we turn to the case where $e_n\cdot y=0$. Then we have 
\[
D\phi(x)=-\fr 1\gd D_x\psi(x,y)\in D^+v^\gd(x) \ \ \hb{ and } \ \ 
\fr 1\gd D_y\psi(x,y)\in D^+_{B_r\cap\{e_n^*\leq 0\}}v(y).
\]
Using (\ref{eq:2.15}), we compute  
\[
\gg(x)\cdot D_y\psi(x,y)
=-e_n\!\cdot\!\gg(x)\ e_n\!\cdot\! (x-y)
=-e_n\!\cdot\!\gg(x)\ e_n\!\cdot\! x
> -C_5\gd^2, 
\] 
where $C_5>0$ is a bound of $\sup_{B_r}|\gg|$.  
Since $|D_y\psi(x,y)|/\gd\leq C_4$ by (\ref{eq:2.16}),  
we get
\begin{align*}
\gg(y)\cdot \fr 1\gd D_y\psi(x,y)
=&\, \gg(x)\cdot\fr 1\gd D_y\psi(x,y)+\left(\gg(y)-\gg(x)\right)
\cdot\fr 1\gd D_y\psi(x,y)\cr
>&\,-C_5\gd-C_4\go_\gg(|x-y|)\geq -C_5\gd-C_4\go_\gg(C_2\gd), 
\end{align*}
where $\go_\gg$ denotes the modulus of continuity of $\gg$. 
We select a $\gd_3>0$ so that $C_4\go_\gg(C_2\gd_3)+C_5\gd_3<\gep$, and 
assume that $0<\gd < \gd_1\wedge \gd_3$. 
Then we have $\gg(y)\cdot D_y\psi^\gd(x,y)/\gd>-\gep$. 
Since $v$ is a viscosity subsolution of (\ref{eq:2.11}),  
we get $G\left(y,\,D_y\psi(x,y)/\gd\right)\leq 0$. 
Now, as in the previous case, we obtain 
\[
0\geq G\left(x,\,D\phi(x)\right)-\go_G(C_2\gd)-\go_G(C_3\gd). 
\]
Consequently, if $0<\gd<\gd_1\wedge \gd_2\wedge \gd_3$, then we have 
$G(x,\,p)\leq \gep$. That is, in both cases, 
inequality (\ref{eq:2.17}) holds if $0<\gd<\gd_1\wedge\gd_2\wedge\gd_3$.

Now, we assume in addition that $|e_n\cdot x|<\gd^2$ and 
show that if $\gd>0$ is sufficiently small, then 
\begin{equation}\label{eq:2.18}
\gg(x)\cdot p\leq\gep \ \ \hb{ for }p\in D^+v^\gd(x). 
\end{equation}
We fix any $p\in D^+v(x)$ and choose a function $\phi\in C^1(\iol{B}_r)$ 
so that $v^\gd-\phi$ attains a maximum at $x$ and $D\phi(x)=p$. 
For sufficiently small $t>0$, we have $y-t\gg(x)\in B_r$ and hence
\[
v^\gd(x-t\gg(x))\geq v(y)-\fr 1\gd\psi(x-t\gg(x),\,y). 
\]
Thus, for sufficiently small $t>0$, we have 
\[
\phi(x)-\phi(x-t\gg(x))\leq 
v^\gd(x)-
v^\gd(x-t\gg(x))\leq -\fr{1}{\gd}\left(\psi(x,y)-\psi(x-t\gg(x),y)\right),
\]    
which readily yields 
\[
\gg(x)\cdot D\phi(x)\leq -\gg(x)\cdot \fr 1\gd D_x\psi(x,y). 
\]
Noting that $e_n\cdot y\leq 0$ and $|e_n\cdot x|<\gd^2$, we observe by (\ref{eq:2.15}) 
that 
\[ 
\gg(x)\cdot D_y\psi(x,y)\leq -e_n\!\cdot\!\gg(x)\ e_n\!\cdot\! x
<C_5\gd^2.
\]
Using these observations together with (\ref{eq:2.14}), we obtain 
\[
\begin{aligned}
\gg(x)\cdot p\leq\aln  -\gg(x)\cdot \fr 1\gd D_x\psi(x,y) \\
\leq\aln \gg(x)\cdot\fr{1}{\gd}D_y\psi(x,y)+\fr{|\gg(x)|}\gd |D_x\psi(x,y)+D_y\psi(x,y)|
\leq (C_5 +C_3C_5)\gd. 
\end{aligned}
\]
Choosing a constant $\gd_4>0$ so that $C_5(1+C_3)\gd_4<\gep$, we find that 
if $0<\gd<\gd_4$, then (\ref{eq:2.18}) holds. 

Finally, we may choose a constant $\gd_5>0$ so that $|v(x)-v^\gd(x)|<\gep$ for all 
$x\in B_{r/2}\cap\{e_n^*\leq 0\}$ and $0<\gd<\gd_5$.  
Fixing a constant $0<\gd<\min_{1\leq i \leq 5} \gd_i$ 
and setting $w=v^\gd$,  
we see that $w$ satisfies the required properties with $\gd^2$ in place of $\gd$.  
\end{proof}

\begin{proof}[Proof of Theorem \ref{thm:2.1}]
It is enough to show that for each $\tau\in I$ there is a function 
$p_\tau\in L^\infty(I_\tau,\,\R^n)$ for some constant $\gd=\gd_\tau>0$, where $I_\tau
:=I\cap [\tau-\gd,\,\tau+\gd]$,  
such that (\ref{eq:2.1}), (\ref{eq:2.2}) and (\ref{eq:2.3}) hold with $I_\tau$ 
in place of $I$. 

We fix any $\tau\in I$. If $\eta(\tau)\in\gO$, then there is a constant $\gd>0$ 
such that for $t\in I_\tau:=I\cap [\tau-\gd,\,\tau+\gd]$, we have $\eta(t)\in\gO$. 
Lemma \ref{lem:2.2} then guarantees that there is a function $p_\tau\in L^\infty(I_\tau,\,\R^n)$ 
such that (\ref{eq:2.1})--(\ref{eq:2.3}) hold with $I_\tau$ and $p_\tau$    
in place of $I$ and $p$, respectively. 

We may therefore assume that $\eta(\tau)\in\pl\gO$.  
By making a $C^1$ change of variables, we may assume 
that $\eta(\tau)=0$ and that there is a 
constant $r>0$ such that 
$B_r\cap \gO\subset \{e^*_n<0\}$ and 
$\iol{B}_r\cap \{e^*_n<0\}\subset \gO$.   

We choose a constant $\gd>0$ so that $\eta(t)\in B_{r/2}$ for all $t\in I_\tau:=[\tau-\gd,\, 
\tau+\gd]\cap I$. 
We choose  
a constant $L>0$ so that $\{H(x,\,\cdot)\leq 0\}\subset \iol{B}_L$ for $x\in \gO$. 
An immediate consequence is that $u$ is Lipschitz continuous 
on $\ol\gO$ with $L$ 
as its Lipschitz constant.

We may assume that $\gg$ and $g$ are defined on $\ol \gO$ 
as continuous functions. We fix any $\gep>0$ and choose 
functions $\gg_\gep\in C^\infty(\iol{B}_r,\,\R^n)$  
and $g_\gep\in C^\infty(\iol{B}_r,\,\R)$ so that 
\[
|\gg_\gep(x)-\gg(x)|\leq \fr{\gep}L \ \ \hb{ and } \ \ |g_\gep(x)-g(x)|\leq \gep 
\ \ \hb{ for all }x\in B_r\cap \{e_n^*\leq 0\}.
\]
We may assume furthermore by replacing $r>0$ by a smaller one if needed   
that $\gg_\gep(x)\cdot e_n>0$ for all $x\in \iol{B}_r$.

We remark here that $u$ is a subsolution of 
\begin{equation}\label{eq:2.19}
\left\{
\begin{aligned}
&H(x,\,Du(x))\leq 0 \ \ \hb{ in }B_r\cap\{e_n^*<0\}, \\
&D_{\gg_\gep} u(x)=g_\gep(x)+2\gep \ \ \hb{ on }B_r\cap\{e_n^*=0\}. 
\end{aligned}
\right.
\end{equation}
To see this, let $x\in B_r\cap\{e_n^*=0\}$ and $p\in D_{B_r\cap\{e_n^*\leq 0\}}^+u(x)$. 
We have two cases, either $H(x,\,p)\leq 0$ or $\gg(x)\cdot p\leq g(x)$. 
If $H(x,\,p)\leq 0$, then we are done. Otherwise,  
applying Lemma \ref{lem:2.3}, with $B_r$ replaced by a small ball centered at $x$, we find that
\[
\gg_\gep(x)\cdot p\leq g(x)+\gep\leq g_\gep(x)+2\gep.
\] 
That is, $u$ is a subsolution of (\ref{eq:2.19}).

Thanks to Lemma \ref{lem:2.4}, there is a function $\psi_\gep\in C^\infty(\iol{B}_r)$ such that 
\begin{equation}\label{eq:2.20} 
\gg_\gep(x)\cdot D\psi_\gep(x)=g_\gep(x)+3\gep \ \ \hb{ for }x\in B_r. 
\end{equation} 
We have used here the fact that $\gg_\gep$ and $g_\gep$ can be extended to $C^\infty$ functions 
on $\R^n$ so that $\inf_{x\in\R^n}\gg_\gep(x)\cdot e_n>0$ and 
$\sup_{\R^n}(|g_\gep|+|\gg_\gep|)<\infty$. 

We may assume by extending $H$ to $\iol{B}_r\cap \{e_n^*>0\}$ in an appropriate manner 
that $H$ is defined and continuous at least 
on $\iol{B}_r\tim\R^n$ and satisfies (A2), with $B_r$ in place of $\gO$.  
We set $v_\gep:=u-\psi_\gep$ on $\iol{B}_r\cap\{e_n^*\leq 0\}$ and 
$G_\gep(x,\,p)=H(x,\,p+D\psi_\gep(x))$ for $(x,p)\in (\iol{B}_r\tim\R^n$. 
It is obvious that $v_\gep$ is a subsolution of $G_\gep(x,\,Dv_\gep(x))=0$
in $B_r\cap\{e_n^*<0\}$. Moreover, it is easily checked that  $v_\gep$ is a subsolution of 
\[
\left\{
\begin{aligned}
&G_\gep(x,\,Dv_\gep(x))\leq 0 \ \ \hb{ in }B_r\cap\{e_n^*<0\}, \\
&D_{\gg_\gep} v_\gep(x)=-\gep \ \ \hb{ on }B_r\cap\{e_n^*=0\}. 
\end{aligned}
\right.
\]

According to Lemma \ref{lem:2.5}, there exists a function 
$\phi_\gep\in C(\iol{B}_{r/2}\cap\{e_n^*\leq \rho_\gep\})$, with $\rho_\gep>0$, such that 
\[
|v_\gep(x)-\phi_\gep(x)|<\gep \ \ \hb{ for }x\in B_r\cap\{e_n^*\leq 0\}, 
\]
and $\phi_\gep$ is both a subsolution of 
\begin{equation}\label{eq:2.21}
G_\gep(x,\,D\phi_\gep(x))=\gep \ \ \hb{ in }B_{r/2}\cap\{e_n^*<\rho_\gep\}, 
\end{equation}
and of 
\begin{equation}\label{eq:2.22}
\gg_\gep(x)\cdot D\phi_\gep(x)=\gep \ \ \hb{ on }B_{r/2}\cap\{|e_n^*|<\rho_\gep\}. 
\end{equation}

Now, according to Lemma \ref{lem:2.2}, there is a function 
$q_{\gep}\in L^\infty(I_\tau,\,\R^n)$ such that $q_{\gep}(t)\in\pl_{{\rm C}}\phi_\gep(\eta(t))$ 
and $(\d\phi_\gep\circ \eta/\d t)(t)=q_{\gep}(t)\cdot \DT\eta(t)$ for a.e. $t\in I_\tau$, 
The last equality can be stated as 
\begin{equation}\label{eq:2.23}
\phi_\gep(\eta(t))-\phi_\gep(\eta(\tau))=\int_\tau^t q_{\gep}(s)\cdot \DT\eta(s)\d s 
\ \ \hb{ for }t\in I_\tau. 
\end{equation}
Setting 
\[I_{\tau,\pl}=\{t\in I_\tau\mid e_n\cdot \eta(t)=0\}.
\]
and noting that $\eta(t)\in B_{r/2}\cap\{e_n^*\leq 0\}$ for $t\in I_\tau$, 
from (\ref{eq:2.21}) and (\ref{eq:2.22})
we get
\begin{align}
&G_\gep(\eta(t),\,q_{\gep}(t))\leq \gep \ \ \hb{ for a.e. }t\in I_\tau, 
\label{eq:2.24}\\
&\gg_\gep(\eta(t))\cdot q_{\gep}(t)\leq \gep \ \ \hb{ for a.e. }t\in
I_{\tau,\pl}.\label{eq:2.25}
\end{align}
  
We set $p_{\gep}(t)=q_{\gep}(t)+D\psi_\gep(\eta(t))$ for 
$t\in I_\tau$. Then (\ref{eq:2.23}), (\ref{eq:2.24}) and (\ref{eq:2.25}) read 
\begin{align}
&\phi_\gep(\eta(t))-\phi_\gep(\eta(\tau))=\int_\tau^t (p_{\gep}(s)-D\psi_\gep(s))\cdot \DT\eta(s)\d s 
\ \ \hb{ for }t\in I_\tau. \label{eq:2.26}\\
&H(\eta(t),\,p_{\gep}(t))\leq \gep \ \ \hb{ for a.e. }t\in I_{\tau}, \label{eq:2.27}\\
&\gg_\gep(\eta(t))\cdot\big(p_{\gep}(t)-D\psi_\gep\left(\eta(t)\right)\big)
\leq \gep \ \ \hb{ for a.e. }t\in I_{\tau,\pl}. \label{eq:2.28}
\end{align}
Combining this (\ref{eq:2.28}) and (\ref{eq:2.20}), we get 
\[
\gg_\gep(\eta(t))\cdot p_{\gep}(t)\leq g_\gep(\eta(t))+4\gep \ \ \hb{ for a.e. }
t\in I_{\tau,\pl}. 
\] 
By (\ref{eq:2.26}), we have
\[
(\phi_\gep+\psi_\gep)(t)=(\phi_\gep+\psi_\gep)(\tau)
+\int_\tau^t p_\gep(s)\cdot \DT\eta(s)\d s \ \ 
\hb{ for }t\in I_\tau. 
\]
We note here that $\phi_\gep+\psi_\gep\to u$ uniformly on $\iol{B}_r\cap\{e_n^*\leq 0\}$ 
as $\gep\to 0$.  
From (\ref{eq:2.27}), we find that 
\[
|p_{\gep}(t)|\leq L \ \ \hb{ for a.e. }t\in I_{\tau}. 
\]
Hence, there is a sequence $\gep_j \to 0+$ such that 
the sequence $\{p_{\gep_j}\}_{j\in\N}$ converges to some function 
$p$ on $I_\tau$ weakly-star in $L^\infty(I_\tau,\,\R^n)$. It is a standard 
observation that there is a sequence $\{\pi_{j}\}_{j\in\N}
\subset L^\infty(I_\tau,\,\R^n)$ such that 
$\pi_{j}(t)$ converges to $p(t)$ for a e. $t\in I_\tau$ 
and for each $j$, the function $\pi_{j}$ is a convex combination 
of $\{p_{\gep_k}\}_{k\geq j}$. Thus, sending $j\to\infty$,  we find that 
\begin{align*}
u(\eta(t))-u(\eta(\tau))=\aln \int_\tau^t p(s)\cdot \DT\eta(s)\d s  
\ \ \hb{ for }t\in I_\tau, \\
H(\eta(t),\,p(t))\leq \aln 0 \ \  \hb{ for a.e. }t\in I_\tau, \\
\gg(\eta(t))\cdot p(t)\leq \aln g(\eta(t)) 
\ \ \hb{ for a.e. }t\in I_{\tau,\pl}.
\end{align*}
The proof is now complete. 
\end{proof}

\section{Proof of Theorem \ref{thm:1.3}}

We start by recalling some results established in \cite{Ishii_weakKAM_10} and needed in this section. 

We write $J=[0,\,\infty)$ for simplicity of notation. 
The Skorokhod problem associated with $(\gO,\,\gg)$ is to find a pair $(\eta,l)\in \Lip(J,\,\R^n) 
\tim L^\infty(J,\,R)$, for given $x\in\ol\gO$ and $v\in L^\infty(J,\,\R^n)$, 
such that 
\begin{equation}\label{eq:3.1}
\left\{
\begin{aligned}
&\eta(0)=x,\qquad \eta(s)\in\ol\gO \ \ \hb{ for all }s\in J,\\
&l(s)\geq 0 \ \ \hb{ for a.e. }s\in J,\\ 
&l(s)=0 \ \ \hb{ for a.e. } s\in J \ \hb{ such that }\eta(s)\in\gO,\\
&\DT\eta(s)+l(s)\gg(\eta(s))=v(s) \ \ \hb{ for a.e. }s\in J.  
\end{aligned}
\right.
\end{equation} 
According to  \cite[Theorem 4.2]{Ishii_weakKAM_10}, problem (\ref{eq:3.1}) has a solution. 
For $x\in\ol\gO$, we denote by $\SP(x)$ the set of all triples 
\[
(\eta,v,l)\in\Lip(J,\,\R^n)\tim L^\infty(J,\R^n) 
\tim L^\infty(J,\,\R)
\]
which satisfy (\ref{eq:3.1}). 

Let $L$ denote the Lagrangian of $H$. That is, $L(x,\,\xi)
=\sup_{p\in\R^n}\{\xi\cdot p-L(x,\,p)\}$ for $(x,\,\xi)\in\ol\gO\tim\R^n$. 
Thanks to \cite[Theorem 5.1]{Ishii_weakKAM_10}, we know that if $u$ is a solution of (\ref{eq:1.1})--(\ref{eq:1.3}), 
then 
\[
\begin{aligned}
u(x,t)=\inf\Big\{
&\int_0^t \big(L(\eta(s),\,-v(s))+g(\eta(s))l(s)\big)\d s\\ 
&+u_0(\eta(t))\mid (\eta,v,l)\in\SP(x)\Big\} \ \ 
\hb{ for }(x,t)\in\ol\gO\tim(0,\,\infty). 
\end{aligned}
\]
The dynamic programming principle yields for $(x,t)\in\ol\gO\tim(0,\,\infty)$ 
and $0<\tau<t$,  
\[
\begin{aligned}
u(x,t)=\inf\Big\{
&\int_0^\tau \big(L(\eta(s),\,-v(s))+g(\eta(s))l(s)\big)\d s\\
&+u(\eta(\tau),\,t-\tau)\mid (\eta,v,l)\in\SP(x)\Big\}.
\end{aligned}
\]

We assume throughout this section that $c=0$, i.e., problem (\ref{eq:1.5}) has a solution.  
In what follows, $u=u(x,t)$ will denote the unique solution of (\ref{eq:1.1})--(\ref{eq:1.3}).  
Due to \cite[Lemma 6.5]{Ishii_weakKAM_10}, if we set
\[
u_\infty(x)=\liminf_{t\to\infty}u(x,t) \ \ \hb{ for }x\in\ol\gO, 
\]
then $u_\infty\in\Lip(\ol\gO)$ and $u_\infty$ is a solution of (\ref{eq:1.5}). 
Moreover, the proof of \cite[Lemma 6.5]{Ishii_weakKAM_10} shows that the convergence 
\begin{equation}\label{eq:3.2}
u_\infty(x)=\lim_{r\to \infty}\inf\{u(y,t)\mid  t>r\}.
\end{equation}
is uniform for $x\in\ol\gO$.  

Due to \cite[Theorem 7.3]{Ishii_weakKAM_10}, if $\phi$ is a solution of (\ref{eq:1.5}), then for each $x\in\ol\gO$  
there exists a triple $(\eta,\,v,\,l)\in\SP(x)$ such that 
\[
\phi(x)-\phi(\eta(t))=\int_0^t\big(L(\eta(s),\,-v(s))+l(s)g(\eta(s))\big)\d s  
\ \ \hb{ for }t>0. 
\]

According to \cite[Lemma 4.4]{IchiharaIshii_09} or \cite[Lemma 5.2]{DaviniSiconolfi_06}, 
if, in addition to (A1) and (A2), (A5)$_+$ (resp., (A5)$_-$) is satisfied, then 
there is a constant $\gd_1>0$ such that for any $\gd\in[0,\,\gd_1]$ and $(x,\xi)\in S$, 
\begin{equation}\label{eq:3.3}
\begin{gathered}
L(x,\,(1+\gd)\xi)\leq (1+\gd)L(x,\,\xi)+\gd\go_1(\gd),\\
(\hb{resp., } \ L(x,\,(1-\gd)\xi)\leq (1-\gd)L(x,\,\xi)+\gd\go_1(\gd)\,). 
\end{gathered} 
\end{equation}
(The definition of $S$ is given in Section 1 as well as that of $Q$.)

\begin{proof}[Proof of Theorem \ref{thm:1.3}]  
It is enough to show that 
\begin{equation}\label{eq:3.4}
\lim_{t\to \infty}(u(x,t)-u_\infty(x))_+=0 \ \ \hb{ for all }x\in\ol\gO 
\end{equation}
and the convergence is uniform for $x\in\ol\gO$. 
In fact, it is immediate to see from this uniform convergence and (\ref{eq:3.2}) 
that $u(x,t)\to u_\infty(x)$ uniformly 
for $x\in\ol\gO$ as $t\to\infty$. 

Fix any $z\in\ol\gO$. 
Let $(\eta,v,l)\in\SP(z)$ be such that for all $t>0$,
\[
u_\infty(z)-u_\infty(\eta(t))=\int_0^t\big(L(\eta(s),-v(s))+l(s)g(\eta(s))\big)\d s
\]  
Due to Theorem \ref{thm:2.1}, there exists a function $q\in L^\infty(J,\,\R^n)$ such that
\[
\left\{
\begin{aligned}
&\fr{\d}{\d s}u_\infty(\eta(s))=q(s)\cdot \DT \eta(s) \ \ \hb{ for a.e. }s\in J,\\
&H(\eta(s),\,q(s))\leq 0 \ \ \hb{ for a.e. }s\in J, \\ 
&\gg(\eta)\cdot q(s)\leq g(\eta(s)) \ \ \hb{ for a.e. }s\in J_\pl,
\end{aligned}
\right.
\]
where $J_{\pl}:=\{s\in J\mid \eta(s)\in\pl\gO\}$. 

We now show that 
\begin{align}
&H(\eta(s),\,q(s))=0 \ \ \hb{ for a.e. s }\in J, \label{eq:3.5}\\
&\gg(\eta(s))\cdot q(s)=g(\eta(s)) \ \ \ \hb{ for a.e. }s\in J_\pl,\label{eq:3.6}\\
&-q(s)\cdot v(s)=H(\eta(s),\,q(s))+L(\eta(s),\,-v(s)) \ \ \hb{ for  a.e. s }\in J.\label{eq:3.7} 
\end{align}

We remark here that equality (\ref{eq:3.7}) is equivalent to saying that 
\[
-v(s)\in D_2^-H(\eta(s),\,q(s)) \ \ \hb{ for a.e. }s\in J,
\]
or 
\[
q(s)\in D_2^-L(\eta(s),\,-v(s)) \ \ \hb{ for a.e. }s\in J,
\]
where $D_2^-f(x,y)$ stands for the subdifferential with respective to the second 
variable $y$ of the function $f$ on a subset of $\R^n\tim\R^n$ at $(x,y)$.  

Fix any $t>0$.  Noting that 
\[
u_\infty(\eta(t))-u_\infty(\eta(0))=\int_0^t q(s)\cdot \DT \eta(s)\d s, 
\] 
we compute that
\begin{align*}
&\int_0^t\big(L(\eta(s),\,-v(s))+l(s)g(\eta(s))\big)\d s\\
&=u_\infty(z)-u_\infty(\eta(t))=-\int_0^t q(s)\cdot \DT\eta(s)\d s\\
&= \int_0^t q(s)\cdot(l(s)\gg(\eta(s))-v(s))\d s\\
&\leq \int_0^t \big(H(\eta(s),\,q(s))+L(\eta(s),\,-v(s))+l(s)g(\eta(s))\big)\d s\\
&\leq \int_0^t \big(L(\eta(s),\,-v(s))+l(s)g(\eta(s))\big)\d s.
\end{align*}
This series of inequalities ensures that (\ref{eq:3.5})--(\ref{eq:3.7}) hold. 

We fix any $\gep>0$, and prove that there is a constant $\tau>0$ and for each $x\in\ol\gO$ 
a number $\gs(x)\in [0,\,\tau]$ 
for which  
\begin{equation}\label{eq:3.8}
u_\infty(x)+\gep>u(x,\gs(x)).  
\end{equation}

In view of the definition of $u_\infty$, 
for each $x\in\ol\gO$ there is a constant $t(x)>0$ such that 
\[
u_\infty(x)+\gep>u(x,\,t(x)). 
\]
By continuity, for each fixed $x\in\ol\gO$, we can choose a constant $r(x)>0$ 
so that 
\[
u_\infty(y)+\gep>u(y,\,t(x)) \ \ \hb{ for } y\in\ol\gO\cap B_{r(x)}(x),
\] 
where $B_\rho(x):=\{y\in\R^n\mid |y-x|< \rho\}$. 
By compactness, there is a finite sequence $x_i$, $i=1,2,...,N$, such that 
\[
\ol\gO\subset \bigcup_{1\leq i\leq N}B_{r(x_i)}(x_i), 
\]

That is, for any $y\in\ol\gO$ there exists $x_i$, with $1\leq i\leq N$, 
such that $y\in B_{r(x_i)}(x_i)$, which implies 
\[
u_\infty(y)+\gep>u(y,\,t(x_i)). 
\]
Thus, setting 
\[
\tau=\max_{1\leq i\leq N}t(x_i), 
\]
we find that for each $x\in\ol\gO$ there is a constant $\gs(x)\in[0,\tau]$ such that 
(\ref{eq:3.8}) holds.

The rest of the proof is similar to the proof of \cite[Theorem 4.3]
{IchiharaIshii_09}, but for completeness 
we give here the details. 

In what follows we fix $\tau>0$ and $\gs(x)\in[0,\,\tau]$, with $x\in\ol\gO$, 
so that (\ref{eq:3.8}) holds.   
Also, we fix a constant $\gd_1>0$ and a modulus $\go_1$ so that (\ref{eq:3.3}) holds. 

We divide our argument into two cases according to which hypothesis is valid, (A5)$_+$ or (A5)$_-$. 
We first argue under hypothesis (A5)$_+$.  
Choose a constant $T>\tau$ so that $\tau/(T-\tau)\leq\gd_1$. 
Fix any $t\geq T$, and set $\gth=\gs(\eta(t))\in[0,\,\tau]$. 
We set $\gd=\gth/(t-\gth)$ and note that $\gd\leq \tau/(t-\tau)\leq\gd_1$.  
We define 
functions $\eta_\gd$, $v_\gd$, $l_\gd$ on $J$ by 
\begin{align*} 
\eta_\gd(s)=\aln \eta((1+\gd)s), \\
v_\gd(s)=\aln (1+\gd)v((1+\gd)s), \\
l_\gd(s)=\aln (1+\gd)l((1+\gd)s),
\end{align*}
and note that $(\eta_\gd,\,v_\gd,\,l_\gd) \in\SP(z)$. 
By (\ref{eq:3.5}) and (\ref{eq:3.7}), we know that 
$(\eta(s),\,q(s))\in Q$ and 
$(\eta(s),\,-v(s))\in S$ for a.e. $s\in J$. 
Therefore, by (\ref{eq:3.3}), we get  
\[
L(\eta_\gd(s),\,-v_\gd(s))\leq
(1+\gd)L(\eta((1+\gd)s),\,-v(1+\gd)s))+\gd\go_1(\gd) \ \ \hb{ for a.e. } s\in J. 
\]
Integrating this over $(0,\,t-\gth)$, making a change of variables in the integral and 
noting that $(1+\gd)(t-\gth)=t$, we get  
\begin{align*}
\int_0^{t-\gth} L(\eta_\gd(s),\,-v_\gd(s))\d s
\leq \aln \int_0^{t} L(\eta(s),\,-v(s))\d s+(t-\gth)\gd\go_1(\gd)\\  
= \aln \int_0^{t} L(\eta(s),\,-v(s))\d s+\gth\go_1(\gd),  
\end{align*} 
as well as 
\[
\int_0^{t-\gth} l_\gd(s)g(\eta_\gd(s))\d s=\int_0^t l(s)g(\eta(s))\d s.
\]
Moreover, 
\begin{align*}
u(z,t)\leq\aln 
\int_0^{t-\gth} \big(L(\eta_\gd(s),\,v_\gd(s))+l_\gd(s)g(\eta_\gd(s))\big)\d s 
+u(\eta_\gd(t-\gth),\,\gth)\\
\leq\aln \int_0^t \big(L(\eta(s),\,-v(s))+l(s)g(\eta(s))\big)\d s+\gth\go_1(\gd)
+u(\eta_\gd(t-\gth),\,\gth) \\
=\aln u_\infty(z)-u_\infty(\eta(t)) +\tau\go_1(\gd)
+u(\eta(t),\,\gs(\eta(t))) \\
<\aln u_\infty(z)-u_\infty(\eta(t)) +\tau\go_1(\gd) 
+u_\infty(\eta(t))+\gep\\
=\aln u_\infty(z)+\tau\go_1(\gd)+\gep.
\end{align*} 
Thus, recalling that $\gd\leq \tau/(t-\tau)$, we get 
\begin{equation}\label{eq:3.9}
u(z,t)\leq u_\infty(z)+\tau\go_1\Big(\fr \tau{t-\tau}\Big)+\gep.
\end{equation}

Next, we consider the case where (A5)$_-$ is satisfied. 
We choose $T>\tau$ as before, and fix $t\geq T$. Set 
$\gth=\gs(\eta(t-\tau))\in[0,\,\tau]$ and $\gd=(\tau-\gth)/(t-\gth)$. 
Observe that $(1-\gd)(t-\gth)=t-\tau$ 
and $\gd\leq\tau/(t-\tau)\leq \gd_1$. 

We set  
$\eta_\gd(s)=\eta((1-\gd)s)$, $v_\gd(s)= (1-\gd)v((1-\gd)s)$ and 
$l_\gd(s)= (1-\gd)l((1-\gd)s)$ 
for $s\in J$ and observe that $(\eta_\gd,\,v_\gd,\,l_\gd) \in\SP(z)$. 
As before, thanks to (\ref{eq:3.3}), we get  
\[
L(\eta_\gd(s),\,-v_\gd(s))\leq
(1-\gd)L(\eta((1-\gd)s),\,-v(1-\gd)s))+\gd\go_1(\gd) \ \ \hb{ for a.e. }s\in J. 
\]
Hence, we obtain
\begin{align*}
\int_0^{t-\gth} L(\eta_\gd(s),\,-v_\gd(s))\d s
\leq \aln \int_0^{t-\tau} L(\eta(s),\,-v(s))\d s+(t-\gth)\gd\go_1(\gd)\\  
= \aln \int_0^{t-\tau} L(\eta(s),\,-v(s))\d s+(\tau-\gth)\go_1(\gd),  
\end{align*} 
and  
\[
\int_0^{t-\gth} l_\gd(s)g(\eta_\gd(s))\d s=\int_0^{t-\tau} l(s)g(\eta(s))\d s.
\]
Moreover, we get 
\begin{align*}
u(z,t)\leq\aln 
\int_0^{t-\gth} \big(L(\eta_\gd(s),\,v_\gd(s))+l_\gd(s)g(\eta_\gd(s))\big)\d s 
+u(\eta_\gd(t-\gth),\,\gth)\\
\leq\aln \int_0^{t-\tau} \big(L(\eta(s),\,-v(s))+l(s)g(\eta(s))\big)\d s+(\tau-\gth)\go_1(\gd)
+u(\eta(t-\tau),\,\gth) \\
=\aln u_\infty(z)-u_\infty(\eta(t-\tau)) +(\tau-\gth)\go_1(\gd)
+u(\eta(t-\tau),\,\gs(\eta(t-\tau))) \\
<\aln u_\infty(z)-u_\infty(\eta(t-\tau)) +(\tau-\gth)\go_1(\gd) 
+u_\infty(\eta(t-\tau))+\gep\\
=\aln u_\infty(z)+\tau\go_1(\gd)+\gep.
\end{align*} 
Thus, we get
\[
u(z,t)\leq u_\infty(z)+\tau\go_1\Big(\fr{\tau}{t-\tau}\Big)+\gep,
\]

From the above inequality and (\ref{eq:3.9}) we see that if $t\geq T$, then 
\[
u(x,t)\leq u_\infty(x)+\tau\go_1\Big(\fr{\tau}{t-\tau}\Big)+\gep \ \ \hb{ for all }x\in\ol\gO,
\]
which shows that (\ref{eq:3.4}) is valid. The proof is now complete.    
\end{proof}

\section{Remark on asymptotic solutions} 

We continue to assume that the critical value $c$ is zero. Let $u$ denote the unique 
solution of (\ref{eq:1.1})--(\ref{eq:1.3}).   
We set 
\begin{align*}
u_0^-(x)=\aln\sup\left\{\psi(x)\mid \psi\hb{ is a subsolution of }(\ref{eq:1.5}),\ \psi\leq u_0\hb{ on }\ol\gO\right\} \ \ \hb{ for }x\in\ol\gO,\\
u_d^-(x)=\aln\inf\{ d(x,y)+u_0(y)\mid y\in\ol\gO\} \ \ \hb{ for }x\in\ol\gO,\\
u^-(x,t)=\aln\inf\{u(x,t+\tau)\mid \tau>0\} \ \ \hb{ for }(x,t)\in\ol\gO\tim[0,\,\infty),\\
u_0^\infty(x)=\aln\inf\{\phi(x)\mid \phi \hb{ is a solution of }(\ref{eq:1.5}),\ 
\phi\geq u_0^-\hb{ on }\ol\gO\} \ \ \hb{ for }x\in\ol\gO,\\
u_d^\infty(x)=\aln\inf\{d(x,y)+u_d^-(y)\mid y\in\cA\} \ \ \hb{ for }x\in\ol\gO,\\
u_\infty(x)=\aln\sup\{u^-(x,t)\mid t>0\} \ \ \hb{ for }x\in\ol\gO,
\end{align*}
where $d$ denotes the function on $\ol\gO\tim\ol\gO$ given by
\[
d(x,y)=\sup\{\psi(x)-\psi(y)\mid \psi \hb{ is a subsolution of }(\ref{eq:1.5})\},
\]
and $\cA$ denotes the Aubry (or Aubry-Mather) set associated with (\ref{eq:1.5}). 
See \cite{Ishii_weakKAM_10} for the definition of the Aubry set associated with (\ref{eq:1.5}).  

Note here that the proof of Theorem \ref{thm:1.3} shows that 
\[
\lim_{t\to\infty} u(x,t)=u_\infty(x) \ \ \hb{ for }x\in\ol\gO. 
\]
The following proposition gives two other formulas for $u_\infty$, which are valid 
without one of assumptions (A5)$_+$ or (A5)$_-$.

\begin{prop}\label{prop:4.1}
For every $x\in\ol\gO$ we have 
\begin{align}
&u_0^-(x)=u_d^-(x)=u^-(x,\,0), \\
&u_0^\infty(x)=u_d^\infty(x)=u_\infty(x).
\end{align}
\end{prop}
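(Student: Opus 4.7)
The plan is to treat the two chains $u_0^- = u_d^- = u^-(\cdot, 0)$ and $u_0^\infty = u_d^\infty = u_\infty$ separately, routing both through the intrinsic distance $d$ and the value function representation of $u$ recorded in Section~3.

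For the sub-level chain, I would first identify $u^-(\cdot, 0)$ with $u_d^-$ by unfolding the representation
\[
u(x,\tau) = \inf_{(\eta,v,l) \in \SP(x)} \left\{\int_0^\tau \bigl(L(\eta(s), -v(s)) + l(s) g(\eta(s))\bigr)\d s + u_0(\eta(\tau))\right\}
\]
and grouping the infimum first by the endpoint $y = \eta(\tau)$ and then over admissible paths and positive terminal times, which yields $\inf_{\tau > 0} u(x,\tau) = \inf_y (d(x,y) + u_0(y))$, once one knows that $d(x,y)$ equals the infimum of the action integral among Skorokhod triples joining $x$ to $y$ in any positive time (a Mather-type distance formula implicit in \cite{Ishii_weakKAM_10}). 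For the equality $u_0^- = u_d^-$, every subsolution $\psi$ of (\ref{eq:1.5}) with $\psi \leq u_0$ satisfies $\psi(x) \leq \psi(y) + d(x,y) \leq u_0(y) + d(x,y)$ by definition of $d$, so $\psi \leq u_d^-$; conversely $u_d^-(x) \leq d(x,x) + u_0(x) = u_0(x)$, and the triangle inequality $d(x,y) \leq d(x,x') + d(x',y)$ applied inside the infimum gives $u_d^-(x) - u_d^-(x') \leq d(x,x')$, which by the standard weak-KAM criterion (a Lipschitz function dominated by $d$ is a subsolution of (\ref{eq:1.5})) makes $u_d^-$ itself admissible in the supremum defining $u_0^-$.

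For the super-level chain, the same triangle inequality gives $u_d^\infty \geq u_d^- = u_0^-$ everywhere, and the standard weak-KAM extension result ($x \mapsto \inf_{y \in \cA}(d(x,y) + h(y))$ solves (\ref{eq:1.5}) whenever the trace $h$ on $\cA$ is $d$-compatible, which $h := u_d^-|_{\cA}$ is, again by the triangle inequality) makes $u_d^\infty$ a solution of (\ref{eq:1.5}), so $u_0^\infty \leq u_d^\infty$. Conversely, every solution $\phi$ of (\ref{eq:1.5}) admits the Aubry-set representation $\phi(x) = \inf_{y \in \cA}(d(x,y) + \phi(y))$; picking a minimizer $y^* \in \cA$ gives $u_d^\infty(x) \leq d(x,y^*) + u_d^-(y^*) \leq d(x,y^*) + \phi(y^*) = \phi(x)$ whenever $\phi \geq u_0^- = u_d^-$, hence $u_d^\infty \leq u_0^\infty$. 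On the dynamical side, $u_\infty = \liminf_{t \to \infty} u(\cdot, t)$ is a solution of (\ref{eq:1.5}) by \cite[Lemma 6.5]{Ishii_weakKAM_10}, and comparison with the time-independent subsolution $u_0^-$ gives $u_0^- \leq u(\cdot, t)$ for all $t$, hence $u_\infty \geq u_0^\infty$. The reverse $u_\infty \leq u_0^\infty$ reduces via the Aubry-set representations of both $u_\infty$ and $u_d^\infty$ to the trace identity $u_\infty|_{\cA} = u_d^-|_{\cA}$.

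The main obstacle is this last step, namely $u_\infty(y) \leq u_d^-(y)$ for $y \in \cA$ (the reverse inequality being immediate from $u_\infty \geq u_d^-$). I would establish it by combining the characterization of the Aubry set in \cite{Ishii_weakKAM_10} with a splicing argument in the spirit of \cite{DaviniSiconolfi_06}: for $y \in \cA$ and $\gep > 0$ there exist arbitrarily long Skorokhod loops at $y$ of total action less than $\gep$, and concatenating such a loop with a near-optimal Skorokhod path for $u_d^-(y) = \inf_z (d(y,z) + u_0(z))$ produces, for arbitrarily large $t$, a competing trajectory in the representation of $u(y, t)$ with payoff at most $u_d^-(y) + \gep$; this forces $\liminf_{t \to \infty} u(y, t) \leq u_d^-(y) + \gep$, and $\gep$ was arbitrary. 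Everything else in the proposition is bookkeeping once this trace identity and the Mather-distance formula for $d$ are in hand.
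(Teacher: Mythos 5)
Your chains of inequalities are all valid, but you reach the two nontrivial identities by a genuinely different and heavier route than the paper. For $u^-(\cdot,0)=u_d^-$ you invoke a Ma\~n\'e-type formula, $d(x,y)=\inf\{\int_0^\tau (L(\eta,-v)+l\,g(\eta))\,\d s\}$ over Skorokhod triples joining $x$ to $y$, and for $u_\infty\leq u_0^\infty$ you pass through the trace identity $u_\infty=u_d^-$ on $\cA$, proved by splicing near-optimal paths with long small-action loops at Aubry points. That program works (taking loops of time length at least $k$ with action below $\gep$ gives a sequence $t_k\to\infty$ along which $u(y,t_k)\leq u_d^-(y)+3\gep$, and concatenation of Skorokhod triples is admissible), but it delegates substantial input to facts only ``implicit'' in \cite{Ishii_weakKAM_10}: the variational formula for $d$, the small-action-loop characterization of $\cA$, and the existence of finite-time near-minimizers for $d(y,z)$; in this Neumann/Skorokhod setting each of these would need a precise citation or proof. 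The paper avoids all of this with soft comparison arguments using only the results it has already quoted: $u^-(x,t)$ is a solution of (\ref{eq:1.1})--(\ref{eq:1.2}) nondecreasing in $t$, so $u^-(\cdot,0)$ is a subsolution of (\ref{eq:1.5}) below $u_0$, whence $u^-(\cdot,0)\leq u_0^-$ (no action formula needed); and since $u_0^\infty\,(=u_d^\infty)$ is a time-independent solution dominating $u^-(\cdot,0)$, parabolic-type comparison gives $u^-( \cdot,t)\leq u_0^\infty$ for all $t$, hence $u_\infty\leq u_0^\infty$ with no Aubry-set trace identity and no trajectory construction; the only dynamical inputs are \cite[Theorems 2.7, 3.4, 6.8]{Ishii_weakKAM_10} and the fact from Section 3 that $u_\infty$ solves (\ref{eq:1.5}). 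Your approach buys a sharper dynamical picture (it identifies the limit through its trace on $\cA$, in the spirit of Davini--Siconolfi), while the paper's is shorter and works entirely at the PDE/comparison level. Two small elisions to tidy up if you keep your route: the step ``$u_0^-\leq u(\cdot,t)$ hence $u_\infty\geq u_0^\infty$'' needs the intermediate remark that $u_\infty$ is a solution of (\ref{eq:1.5}) dominating $u_0^-$ and is therefore admissible in the infimum defining $u_0^\infty$; and your identification $u_\infty=\liminf_{t\to\infty}u(\cdot,t)$ should be reconciled with the paper's definition $u_\infty=\sup_{t>0}u^-(\cdot,t)$, which is immediate from the monotonicity of $u^-(x,\cdot)$ but worth stating.
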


\begin{proof}Let $\psi$ be a subsolution of (\ref{eq:1.5}) satisfying $\psi\leq u_0$ on $\ol\gO$. Then 
we have
\[
\psi(x)\leq \psi(y)+d(x,y)\leq u_0(y)+d(x,y) \ \ \hb{ for }x,y\in\ol\gO. 
\]
Hence, we see that $u_0^-\leq u_d^-$ on $\ol\gO$. 
Next,  since $u_d^-\leq u_0$ on $\ol\gO$, observing by \cite[Theorem 2.7]{Ishii_weakKAM_10} that $u_d^-$ is a subsolution of (\ref{eq:1.5}), 
we find that $u_d^-\leq u_0^-$ on $\ol\gO$. Thus we have $u_0^-=u_d^-$ on $\ol\gO$. 
Now, since $u_0^-\leq u_0$ on $\ol\gO$ 
and $u_0^-(x)$, as a function of $(x,t)$, is a subsolution 
of (\ref{eq:1.1})--(\ref{eq:1.3}), by comparison (see for instance 
\cite[Theorem 3.4]{Ishii_weakKAM_10}) we find that $u_0^-(x)\leq u(x,t)$ 
for $(x,t)\in\ol\gO\tim[0,\,\infty)$. From this it follows that $u_0^-\leq u^-(\cdot,\,0)$ 
on $\ol\gO$.  
On the other hand, $u^-(x,t)$ is a solution of (\ref{eq:1.1}) and (\ref{eq:1.2}) 
and nondecreasing in $t$, and consequently 
$u^-(\cdot,\,0)$ is a subsolution of (\ref{eq:1.5}). Moreover, we have $u^-(\cdot,\,0)\leq u_0$ on 
$\ol\gO$, and conclude that $u^-(\cdot,\,0)\leq u_0^-$ on $\ol\gO$. 
We thus find that $u_d^-=u_0^-=u^-(\cdot,\,0)$ on $\ol\gO$. 

Next let $\phi$ be a solution of (\ref{eq:1.5}) satisfying $\phi\geq u_0^-=u_d^-$ 
on $\ol\gO$. By \cite[Theorem 6.8]{Ishii_weakKAM_10}, we have 
\[
\phi(x)=\inf\{\phi(y)+d(x,y)\mid y\in\cA\} \ \ \hb{ for }x\in\ol\gO,
\]
and hence
\[
\phi(x)\geq \inf\{u_d^-(y)+d(x,y)\mid y\in\cA\} \ \ \hb{ for }x\in\ol\gO.
\]
Accordingly, we get $u_0^\infty\geq u_d^\infty$ on $\ol\gO$. 
Next, if we set $\phi(x)=u_d^-(y)+d(x,y)$ for any fixed $y\in\cA$, 
then $\phi$ is a solution of (\ref{eq:1.5}) and $\phi(x)\geq u_d^-(x)=u_0^-(x)$ 
for $x\in\ol\gO$. Hence we get $u_0^\infty\leq u_d^\infty$ on $\ol\gO$. 
Thus we have $u_0^\infty=u_d^\infty$ on $\ol\gO$.  
Now we note that $u_0^\infty\geq u^-(\cdot,\,0)$ on $\ol\gO$. 
Since $u_0^\infty(x)$ and $u^-(x,t)$, as functions of $(x,t)$, 
are solutions of (\ref{eq:1.1}) and (\ref{eq:1.2}), 
we see by comparison that $u_0^\infty(x)\geq u^-(x,t)$ 
for $(x,t)\in\ol\gO\tim(0,\,\infty)$, from which follows that 
$u_0^\infty\geq u_\infty$ on $\ol\gO$. 
Similarly, we have $u_0^-(x)\leq u^-(x,t)$ for $(x,t)\in\ol\gO\tim[0,\,\infty)$.  
Hence we get $u_0^-\leq u_\infty$ on $\ol\gO$. Since $u_\infty$ is a
solution of (\ref{eq:1.5}), we see that $u_0^\infty\leq u_\infty$ on $\ol\gO$. 
We thus conclude that $u_d^\infty=u_0^\infty=u_\infty$ on $\ol\gO$. 
\end{proof}

The interpretation of the above proposition into the general case of $c$ is straightforward, 
and indeed, 
we just need to replace the solution $u(x,t)$ of (\ref{eq:1.1})--(\ref{eq:1.3}) 
and $H(x,p)$, respectively, by $\tilde u(x,t):=u(x,t)+ct$ 
and $\widetilde H(x,p)=H(x,p)-c$ in the above argument. The conclusion is as follows.   
We set 
\begin{align*}
d(x,y)=\aln \sup\{\psi(x)-\psi(y)\mid \psi \hb{ is a subsolution of }(\ref{eq:1.4})\}
\ \ \hb{ for }x,y\in\ol\gO, \\
u_0^-(x)=\aln\sup\left\{\psi(x)\mid \psi\hb{ is a subsolution of }(\ref{eq:1.4}),\ 
\psi\leq u_0\hb{ on }\ol\gO\right\} \ \ \hb{ for }x\in\ol\gO,\\
u_d^-(x)=\aln\inf\{ d(x,y)+u_0(y)\mid y\in\ol\gO\} \ \ \hb{ for }x\in\ol\gO,\\
u^-(x,t)=\aln\inf\{u(x,t+\tau)+c(t+\tau)\mid \tau>0\} \ \ \hb{ for }(x,t)\in\ol\gO\tim[0,\,\infty),\\
u_0^\infty(x)=\aln\inf\{\phi(x)\mid \phi \hb{ is a solution of }(\ref{eq:1.4}),\ 
\phi\geq u_0^-\hb{ on }\ol\gO\} \ \ \hb{ for }x\in\ol\gO,\\
u_d^\infty(x)=\aln\inf\{d(x,y)+u_d^-(y)\mid y\in\cA\} \ \ \hb{ for }x\in\ol\gO,\\
u_\infty(x)=\aln\sup\{u^-(x,t)\mid t>0\} \ \ \hb{ for }x\in\ol\gO,
\end{align*}
where $\cA$ denotes the Aubry set associated with (\ref{eq:1.4}).  
Then the assertion of Theorem \ref{thm:1.3} together 
with Proposition \ref{prop:4.1} is stated as     
\[
\lim_{t\to\infty}(u(x,t)+ct)=u_\infty(x)=u_d^\infty(x)=
u_0^\infty(x) \ \ \hb{ uniformly on }\ol\gO.
\]

\bibliographystyle{plain} 
\bibliography{ishiiref09}      

\begin{thebibliography}{10}

\bibitem{BardiCapuzzo_97}
M.~Bardi and I.~Capuzzo-Dolcetta.
\newblock {\em Optimal control and viscosity solutions of
  {H}amilton-{J}acobi-{B}ellman equations. {W}ith appendices by {M}aurizio
  {F}alcone and {P}ierpaolo {S}oravia}.
\newblock Birkh\"auser Inc., Boston, MA, 1997.

\bibitem{Barles_85}
G.~Barles.
\newblock Asymptotic behavior of viscosity solutions of first
  {H}amilton-{J}acobi equations.
\newblock {\em Ric Mat.}, 34(2):227--260, 1985.

\bibitem{Barles-book_94}
G.~Barles.
\newblock {\em Solutions de viscosit\'e des \'equations de Hamilton-Jacobi}.
\newblock Math\'ematiques \& Applications. Springer-Verlag, Paris, 1994.

\bibitem{BarlesMitake_10}
G.~Barles and H.~Mitake.
\newblock {\em personal communication}, 2010.

\bibitem{BarlesRoquejoffre_ergodictype_06}
G.~Barles and J.-M. Roquejoffre.
\newblock Ergodic type problems and large time behaviour of unbounded solutions
  of {H}amilton-{J}acobi equations.
\newblock {\em Comm. Partial Differential Equations}, 31(7-9):1209--1225, 2006.

\bibitem{BarlesSouganidis-largetime_00}
G.~Brales and P.~E. Souganidis.
\newblock On the large time behavior of solutions of {H}amilton-{J}acobi
  equations.
\newblock {\em SIAM J. Math. Anal.}, 31(4):925--939, 2000.

\bibitem{UG}
M.~G. Crandall, H.~Ishii, and P.-L. Lions.
\newblock User's guide to viscosity solutions of second order partial
  differential equations.
\newblock {\em Bull. Amer. Math. Soc. (N.S.)}, 27(1):1--67, 1992.

\bibitem{DaviniSiconolfi_06}
A.~Davini and A.~Siconolfi.
\newblock A generalized dynamical approach to the large time behavior of
  solutions of {H}amilton-{J}acobi equations.
\newblock {\em SIAM J. Math. Anal.}, 38(2):478--502, 2006.

\bibitem{fathi_98}
A.~Fathi.
\newblock Sur la convergence du semi-groupe de {L}ax-{O}leinik.
\newblock {\em C. R. Acad. Sci. Paris S\'er. I Math.}, 327(3):267--270, 1998.

\bibitem{FujitaIshiiLoreti_06}
Y.~Fujita, H.~Ishii, and P.~Loreti.
\newblock Asymptotic solutions of {H}amilton-{J}acobi equations in {E}uclidean
  $n$ space.
\newblock {\em Indiana Univ. Math. J.}, 55(5):1671--1700, 2006.

\bibitem{IchiharaIshii_semiperiodic_08}
N.~Ichihara and H.~Ishii.
\newblock Asymptotic solutions of {H}amilton-{J}acobi equations with
  semi-periodic {H}amiltonians.
\newblock {\em Comm. Partial Differential Equations}, 33(4-6):784--807, 2008.

\bibitem{IchiharaIshii_1D_08}
N.~Ichihara and H.~Ishii.
\newblock The large-time behavior of solutions of {H}amilton-{J}acobi equations
  on the real line.
\newblock {\em Methods Appl. Anal.}, 15(2):223--242, 2008.

\bibitem{IchiharaIshii_09}
N.~Ichihara and H.~Ishii.
\newblock Long-time behavior of solutions of {H}amilton--{J}acobi equations
  with convex and coercive {H}amiltonians.
\newblock {\em Arch. Ration. Mech. Anal.}, 194(2):383--419, 2009.

\bibitem{Ishii_08}
H.~Ishii.
\newblock Asymptotic solutions for large time of {H}amilton-{J}acobi equations
  in {E}uclidean $n$ space.
\newblock {\em Ann. Inst. H. Poincar\'e Anal. Non Lin\'eaire}, 25(2):231--266,
  2008.

\bibitem{Ishii_weakKAM_10}
H.~Ishii.
\newblock Weak {K}{A}{M} aspects of convex {H}amilton-{J}acobi equations with
  {N}eumann type boundary conditions.
\newblock {\em preprint}, 2010.

\bibitem{Kruzkov_67}
S.~N. Kruzkov.
\newblock Generalized solutions of nonlinear equations of the first order with
  several independent variables. {I}{I} (in {R}ussian).
\newblock {\em Mat. Sb. (N.S.)}, 72(114):108--134, 1967.

\bibitem{Lions-redbook_82}
P.-L. Lions.
\newblock {\em Generalized solutions of Hamilton-Jacobi equations}, volume~69
  of {\em Research Notes in Mathematics}.
\newblock Pitman, London, 1982.

\bibitem{Lions_Neumann_85}
P.-L. Lions.
\newblock Neumann type boundary conditions for {H}amilton-{J}acobi equations.
\newblock {\em Duke Math. J.}, 52(4):793--820, 4 19985.

\bibitem{Mitake-asymptotic_08}
H.~Mitake.
\newblock Asymptotic solutions of {H}amilton-{J}acobi equations with state
  constraints.
\newblock {\em Appl. Math. Optim.}, 58(3):393--410, 2008.

\bibitem{Mitake-large-time_08}
H.~Mitake.
\newblock The large-time behavior of solutions of the {C}auchy-{D}irichlet
  problem for hamilton-jacobi equations.
\newblock {\em NoDEA Nonlinear Differential Equations Appl.}, 15(3):347--362,
  2008.

\bibitem{NamahRoquejoffre_99}
G.~Namah and J.-M. Roquejoffre.
\newblock Remarks on the long time behaviour of the solutions of
  {H}amilton-{J}acobi equations.
\newblock {\em Comm. Partial Differential Equations}, 24(5-6):883--893, 1997.

\bibitem{Roquejoffre_01}
J.-M. Roquejoffre.
\newblock Convergence to steady states or periodic solutions in a class of
  {H}amilton-{J}acobi equations.
\newblock {\em J. Math. Pures Appl. (9)}, 80(1):85--104, 2001.

\end{thebibliography}

\bye